\newtheorem{theorem}{Theorem}
\newtheorem{lemma}[theorem]{Lemma}
\newtheorem{prop}[theorem]{Proposition}
\newcommand{\lt}{\left}
\newcommand{\rt}{\right}
\newcommand{\bpm}{\begin{pmatrix}}
\newcommand{\epm}{\end{pmatrix}}
\newcommand{\bsm}{\lt(\begin{smallmatrix}}
\newcommand{\esm}{\end{smallmatrix}\rt)}
\newcommand{\beq}{\begin{equation}}
\newcommand{\eeq}{\end{equation}}
\newcommand{\bmat}{\begin{matrix*}}
\newcommand{\emat}{\end{matrix*}}
\renewcommand{\d}{\mathrm{d}}
\newcommand{\Z}{\ensuremath{\mathbb{Z}}}
\newcommand{\N}{\ensuremath{\mathbb{N}}}
\newcommand{\R}{\ensuremath{\mathbb{R}}}
\newcommand{\C}{\ensuremath{\mathbb{C}}}
\renewcommand{\H}{\ensuremath{\mathbb{H}}}
\newcommand{\vep}{\varepsilon}
\DeclareMathOperator{\SL}{SL}
\DeclareMathOperator*{\Res}{Res}
\newcommand{\aquad}{\qquad\qquad}
\newcommand{\bquad}{\aquad\aquad}
\renewcommand{\th}{\textsuperscript{th}}
\newcommand{\hf}{\frac{1}{2}}
\newcommand{\qtr}{\frac{1}{4}}
\renewcommand{\Re}{\operatorname{Re}}
\renewcommand{\Im}{\operatorname{Im}}
\title{Opposite Sign Kloosterman Sum Zeta Function}
\author{Eren Mehmet K\i ral}
\newcommand{\msp}{\mspace{-2mu}}
\begin{document}
     \maketitle

\begin{abstract}
	We study the meromorphic continuation and the spectral expansion of the oppposite sign Kloosterman sum zeta function,
	$$(2\pi \sqrt{mn})^{2s-1}\sum_{\ell=1}^\infty \frac{S(m,-n,\ell)}{\ell^{2s}}$$ for $m,n$ positive integers, to all $s \in \C$. There are poles of the function
	corresponding to zeros of the Riemann zeta function and the spectral parameters of Maass forms. The analytic properties of this function are rather 
	delicate. It turns out that the spectral expansion of the zeta function converges only in a left half-plane, disjoint from the region of absolute
	convergence of the Dirichlet series, even though they both are analytic expressions of the same meromorphic function on the entire complex plane. 
\end{abstract}

\section{Introduction} \label{sec:intro}

        Kloosterman sums play a central role in the spectral theory of modular forms, especially in regard to the Kuznetsov trace formula. One approach in studying 
        Kloosterman sums and related spectral sums is to study the Dirichlet series,
        \begin{equation}\label{eq:zetaDirichlet}
		Z_{m,n}(s) = (2\pi \sqrt{|mn|})^{2s-1} \sum_{\ell =1}^\infty \frac{S(m,n,\ell)}{\ell^{2s}},
        \end{equation}
        where 
        \[
		S(m,n,\ell) = \sum_{\substack{a,\overline{a} \mod \ell\\ a\overline{a} \equiv 1 \mod \ell}} e^{2\pi i \left(an + \overline{a}m \right)/\ell}
        \]
	is the Kloosterman sum associated to the group $\SL(2,\Z)$. In studying this Dirichlet series one may realize it as a Petersson inner product 
        of two real analytic Poincare series; this approach was succinctly carried out in \cite{motohashi1995kloosterman}. 
        
        Another form of the Kuznetsov trace formula is a spectral expansion for the function
        \[
		K(m,n,\phi) = \sum_{\ell=1}^\infty \frac{S(m,n,\ell)}{\ell} \phi\lt(\frac{4\pi \sqrt{|mn|}}{\ell}\rt)
        \]
        with $\phi$ a smooth function having sufficient decay conditions. One may relate one to the other via the Mellin transform formula
        \begin{equation}\label{eq:mellin}
		K(m,n,\phi) = \frac{1}{2\pi i} \int_{(1)} Z_{m,n}(s) \phi^*(s) \d s
        \end{equation}
        where $\phi^*$ is related to the Mellin transform of $\phi$;
        \[
		\phi^*(s) = \int_0^\infty \phi(x) \left( \frac{x}{2} \right)^{-(2s-1)} \frac{\d x}{x}.
        \]

	Both forms of the Kuznetsov trace formula have been extensively studied by Motohashi, (see \cite{motohashi1997spectral} Lemma 2.5 and Theorem 2.3 for 
	spectral expansions of either version and (2.4.6) for the necessary decay conditions on the function $\phi$). However in the case of the zeta function
	$Z_{m,n}(s)$, this was only done when $mn>0$. The case of $m$ and $n$ having opposite signs was left out. In fact, while deriving the opposite sign 
	Kuznetsov trace formula,  Motohashi makes the following remark 
	in his book: \emph{``Unfortunately we do not have an analogue of [the spectral expansion of Kloosterman sum zeta function] in our present situation, 
	which would make our problem easier''} while  trying to employ a variant of \eqref{eq:mellin} in case $mn<0$. Instead he obtains the Kuznetsov trace 
	formula involving the smooth function $\phi$ with recourse to the function
	\begin{equation}\label{eq:modifiedZetaDirichlet}
		\widetilde{Z}_{m,-n}(s) = (2\pi \sqrt{mn})^{2s-1} \sum_{\ell=1}^\infty \frac{S(m,-n;\ell)}{\ell^{2s}} \exp(-4\pi \sqrt{mn}/\ell)
	\end{equation}
	and its spectral expansion. We show that we can obtain a spectral expansion of $Z_{m,-n}(s)$ using that of $\widetilde{Z}_{m.-n}(s)$.
	
	The main goal of this note is to prove the following theorem.
	\begin{theorem}\label{thm:spectralExpansion}
		Let $m,n>0$. The function $Z_{m,-n}(s)$ defined in \eqref{eq:zetaDirichlet} has a meromorphic continuation to all $s\in\C$. 
		Let $R\geq 0 $ be an integer. Let $u_j$ be $L^2$-normalized Maass forms with eigenvalue $\qtr + t_j^2$ and let $\rho_j(n)$ be the $n$\th\ Fourier coefficient
		of $u_j$. For $s \in \C$ such that $\Re(s)<0$
		we have the following spectral expansion:
		\[
			Z_{m,-n} (s) = Z_d(s) + Z_c(s).
		\]
		Here 
		\[
			Z_d(s) = \hf \sum_{j=1}^\infty \overline{\rho_j(m)}\rho_j(-n)  \Gamma(s-\tfrac12 +it_j) \Gamma(s-\tfrac12-it_j),
		\]
		is the discrete spectrum and if $-R - \hf < \Re(s) < -R + \hf$, 
		\[
			Z_c(s) = I(s) + \sum_{r = 0}^R R^-_r(s) 
		\]
		is the continuous spectrum. The continuous spectrum consists of the integral
		\[
			I(s) = \frac{1}{2\pi} \int_{-\infty }^{\infty } \frac{\sigma_{2it}(m) \sigma_{2it}(n)}{(mn)^{it} |\zeta(1 + 2it)|^2}  \cosh(\pi t)  
			\Gamma(s - \tfrac12 + it)\Gamma(s - \tfrac12 - it)\d t
		\]
		and the additional terms obtained as residues when analytically continuing the integral expression $I(s)$ succesively into the strips 
		$-r -\hf < \Re(s) < -r + \hf$, defined as
		\[
			R_r(s) = 2 \frac{(-1)^r}{r!} \Gamma(2s +r - 1)
			\frac{\sigma_{2s + 2r-1}(m)\sigma_{2s+ 2r-1}(n)}{(mn)^{s +r- \hf} \zeta^*(2s + 2r)\zeta^*(2-2s-2r)}.
		\]
		In the above notation the values $m,-n$ have been supressed. We may write $Z_d = Z_{m,-n}^d$ if we want to make the dependence explicit. We may also write
		$Z_d = Z_d^-$ or $R_r = R_r^-$ to indicate that the indices $m$ and $-n$ are of opposite sign.

		If $\Re(s) = -R + \hf$ then the continuous spectrum changes slightly,
		\[
			Z_{m,-n}(s) = Z_d(s) + I(s,\mathcal{C}) + \sideset{}{'} \sum_{r = 1}^{R} R_r(s).
		\]
		Here $\sum\nolimits'$ indicates that the $r = R$ term in the sum is halved, and given any contour of integration $\Omega$ from 
		$-i\infty$ to $i\infty$,
		\[
			I(s,\Omega) = \frac{1}{2\pi i}\int_{\Omega } \frac{\sigma_{2u}(m) \sigma_{2u}(n) \cos(\pi u) }{(mn)^{u} \zeta(1 + 2u)\zeta(1-2u)}
			\Gamma (s -\tfrac 12 + u) \Gamma(s - \tfrac 12 - u) \d u ,
		\]
		as long as the contour does not contain any poles of the integrand. The contour $\mathcal{C}$ is given 
		as $\Re(u) = 1/(20 \log |\Im(u) + 2|)$, and finally $\zeta^*(s) = \zeta(s)\Gamma(\frac{s}{2}) \pi^{-\frac{s}{2}}$ is the completed Riemann zeta function.
		
		Note that if $(0)$ is the path $\Re(s) = 0$, then $I(s, (0)) = I(s)$.
	\end{theorem}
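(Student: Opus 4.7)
The plan is to bootstrap from Motohashi's spectral expansion of the regularized zeta $\widetilde{Z}_{m,-n}(s)$ via a Mellin--Barnes identity relating it to $Z_{m,-n}(s)$. Inserting $e^{-y} = \frac{1}{2\pi i}\int_{(c)}\Gamma(w)y^{-w}\,dw$ with $y = 4\pi\sqrt{mn}/\ell$ into \eqref{eq:modifiedZetaDirichlet} and swapping orders yields
\[
\widetilde{Z}_{m,-n}(s) = \frac{1}{2\pi i}\int_{(c)}\Gamma(w)\,2^{-w}\,Z_{m,-n}(s-w/2)\,dw
\]
for any $c>0$ with $\Re(s) - c/2 > 3/4$. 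Conceptually, $\widetilde{Z}$ and $Z$ correspond to the opposite-sign Kuznetsov test functions $x^{2s-1}e^{-x}$ and $x^{2s-1}$, and this Mellin convolution relates their Bessel transforms.

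I then substitute Motohashi's spectral expansion of $\widetilde{Z}_{m,-n}(s-w/2)$ into the right side and evaluate the $w$-integral term by term. The pivotal identity is the Mellin transform
\[
\int_0^\infty K_{2it}(x)\,x^{2s-2}\,dx = 2^{2s-3}\,\Gamma(s-\tfrac12+it)\Gamma(s-\tfrac12-it),
\]
which is the opposite-sign Bessel transform of the \emph{unsmoothed} test $x^{2s-1}$ at spectral parameter $t$. Each Maass coefficient in Motohashi's expansion is the smoothed (by $e^{-x}$) analogue of this Mellin transform; the $w$-integration undoes the smoothing and produces exactly $\Gamma(s-\tfrac12+it_j)\Gamma(s-\tfrac12-it_j)$ multiplied by $\overline{\rho_j(m)}\rho_j(-n)$, yielding $Z_d(s)$ after summing. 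The same manipulation applied to Motohashi's Eisenstein contribution produces the integrand of $I(s)$.

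Absolute convergence in the claimed regions follows from Stirling: $|\Gamma(s-\tfrac12\pm it)| \ll |t|^{\Re(s)-1}e^{-\pi|t|/2}$, so the Gamma product contributes $e^{-\pi|t_j|}$ decay, easily dominating the polynomial growth of the Fourier coefficients together with the Weyl-law $t_j^2$ density; hence $Z_d(s)$ is entire. The integral $I(s)$ converges absolutely for $\Re(s)<\tfrac12$ by the same Gamma decay (the factors $\cosh(\pi t)$ and $e^{-\pi|t|}$ cancel), together with a standard convexity bound for $\zeta(1+2it)^{-1}$ that costs only a logarithmic factor. The residue terms $R_r(s)$ arise from shifting the $(0)$-contour of $I(s,\Omega)$ as $\Re(s)$ decreases through $-r+\tfrac12$: the poles $u = \pm(s-\tfrac12+r)$ of $\Gamma(s-\tfrac12\mp u)$ cross the imaginary axis in symmetric pairs, and a direct residue computation using $\cos(\pi(s-\tfrac12+r)) = (-1)^r\sin(\pi s)$ together with the completed functional equation for $\zeta$ yields the stated formula for $R_r(s)$. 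On the boundary $\Re(s)=-R+\tfrac12$, the contour $\mathcal{C}$ routes around the pole sitting on the axis (inside the standard zero-free region of $\zeta$), so the outermost residue contributes only with a factor $\tfrac12$, producing the primed sum.

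The principal obstacle is rigorously justifying the interchange of the spectral sum/integral with the Mellin--Barnes $w$-integral. This demands majorants uniform in both $s$ and $w$ that simultaneously control the exponential decay of the spectral Gamma factors, the polynomial growth in $|\Im w|$ from Stirling applied to $\Gamma(w)2^{-w}$, and the behavior of $1/\zeta(1+2it)$ near the edge of the critical strip; with these in place, Fubini applies and the Dirichlet series for $Z_{m,-n}(s)$ is identified with the spectral expansion through their common meromorphic continuation via $\widetilde{Z}$.
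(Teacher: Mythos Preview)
Your Mellin--Barnes identity is correct, but it goes in the wrong direction: it expresses $\widetilde{Z}_{m,-n}(s)$ as an integral transform of $Z_{m,-n}$, not the other way around. Indeed, shifting your contour $(c)$ to $-\infty$ and collecting the residues of $\Gamma(w)$ at $w=-k$ recovers exactly Proposition~\ref{prop:ZTildeExpansion}, $\widetilde{Z}(s)=\sum_k \tfrac{(-2)^k}{k!}Z(s+\tfrac{k}{2})$. So when you write ``substitute Motohashi's spectral expansion of $\widetilde{Z}_{m,-n}(s-w/2)$ into the right side,'' there is nothing to substitute: the right side contains $Z$, which is the unknown. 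The step you describe as ``the $w$-integration undoes the smoothing'' is precisely the inversion that is missing, and it cannot be realized as a Mellin--Barnes integral because the required kernel would be the Mellin transform of $e^{+y}$, which does not exist.

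The paper carries out this inversion by a discrete argument instead: from $e^{y}e^{-y}=1$ one gets the convergent series $Z_{m,-n}(s)=\sum_{k\geq 0}\tfrac{2^k}{k!}\,\widetilde{Z}_{m,-n}(s+\tfrac{k}{2})$ (Proposition~\ref{prop:linearCombination}), and \emph{this} is where Motohashi's expansion is inserted. The $k$-sum over the spectral Gamma ratios is then recognized as ${}_2F_1(2s-1+2it,\,2s-1-2it;\,2s-\tfrac12;\,\tfrac12)$, and Gauss's evaluation \eqref{eq:hypergeometricSpecialValue} together with Legendre duplication collapses it to $\Gamma(s-\tfrac12+it)\Gamma(s-\tfrac12-it)$, which is exactly your target $K$-Bessel Mellin transform. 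Your treatment of the contour shifts producing $R_r(s)$ and of the half-residue on $\Re(s)=-R+\tfrac12$ is fine and matches the paper; the gap is solely the inversion step, for which you need either the series of Proposition~\ref{prop:linearCombination} plus the ${}_2F_1(\tfrac12)$ identity, or some other genuine inverse to the smoothing.
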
	

	Notice that the sum over the discrete spectrum in $Z_d(s)$ converges absolutely for $\Re(s)<0$, and the integral in the continuous spectrum converges for $\Re(s)<1$. This does not overlap with 
	the region of absolute convergence of the Dirichlet series \eqref{eq:zetaDirichlet}, and therefore any attempts at proving such a spectral expansion by trying to 
	insert a ``correct'' test function into the formula (2.5.6) of \cite{motohashi1997spectral} would fail, because that would have found a spectral expansion for 
	the Dirichlet series, and not its analytic continuation. This is the gist of the subtleties in dealing with $Z_{m,-n}(s)$, and is likely the reason that it is 
	left out of the literature.
	
	This phenomenon of a Dirichlet series converging on a right half plane and its spectral expansion converging slowly on a disjoint left half plane was
	also encountered in \cite{hoffstein2011multiple} with a shifted convolution series $\sum_{n=1}^\infty a(n) \overline{b(n+h)} n^{-s}$, where $a(n), b(n)$ are Fourier
	coefficients of modular forms. These two occurences are related; the Fourier coefficients of the Poincare series constructed by Jeffrey Hoffstein in order to
	obtain this Dirichlet series contains the Kloosterman sums $S(m,-h,c)$.

	Even though the two regions are disconnected, there is a separate expression, involving the auxilliary function $\widetilde{Z}_{m,n}(s)$, which ensures meromorphic
	continuation of the opposite sign Kloosterman sum zeta function to the entire complex plane. See the end of Section \ref{sec:linearCombinations}. We then prove the
	spectral expansion of $Z_{m,-n}(s)$ in Section \ref{sec:kuznetsovSpectralExpansions}.
	
	The expression for the discrete spectrum suggests that there should be poles at $s = \hf + it_j$ arising from the gamma factors in $Z_d(s)$, as well as poles of $R_0^-(s)$
	at $s = \rho/2$ where $\rho$ is a critical zero of the Riemann zeta function. Unfortunately these poles are not in the region of convergence of the spectral expansion, 
	nor in the region of absolute convergence of the Dirichlet series expression. Yet in Section \ref{sec:polesResidues} we show that there are poles where there should be, 
	and explicitly write the residues. The residues at these poles equal the apparent residues of the spectral expansion of $Z_{m,-n}(s)$.

	In any application it will be useful to have a bound of $Z_{m,-n}(s)$ in terms of its $mn$ and $\Im(s)$ dependence. We establish an estimation in Section 
	\ref{sec:growth}.
	
	As an easy application of the analytic properties of $Z_{m,-n}(s)$ we find a bound for sum of opposite sign Kloosterman sums, which is uniform in all the parameters.
	\begin{theorem} \label{thm:sharpCutoff}
		Let $m,n >0$, $\vep>0$ and $X \gg1$ a large quantity. One has the bound
		\[
			\sum_{\ell \leq X} \frac{S(m,-n,\ell) }{\ell} \ll_\vep X^{\frac{1}{6} +\vep}  ((m,n)^\vep + (mn)^\theta)  + X^\vep (mn)^{\qtr + \vep },
		\]
		where the implied constant only depends on $\vep>0$, and $\theta$ is the best progress towards the Ramanujan-Petersson conjecture. We may take $\theta = 7/64$ 
		according to the Kim-Sarnak result.
	\end{theorem}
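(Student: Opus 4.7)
The plan is to express the partial sum as a Mellin contour integral involving $Z_{m,-n}(s)$, shift the contour past the critical line $\Re(s) = \hf$, and estimate the result using the meromorphic continuation of Theorem \ref{thm:spectralExpansion} together with the growth bounds established in Section \ref{sec:growth}. Writing $u = 2s - 1$, the Dirichlet series becomes $\sum_\ell S(m,-n,\ell)\ell^{-1-u} = (2\pi\sqrt{mn})^{-u} Z_{m,-n}((u+1)/2)$, absolutely convergent for $\Re(u) > \hf$.

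First, I would insert a smooth cutoff $\psi(\ell/X)$ approximating the characteristic function of $[0,1]$ at scale $1/Y$, for a parameter $Y$ to be optimized. The error between the sharp and smoothed sums is controlled by Weil's bound $|S(m,-n,\ell)| \ll \ell^{\hf + \vep}(m,n,\ell)^{\hf}$ applied to the $O(X/Y)$ values of $\ell$ in the transition range, contributing $O(X^\vep (X/Y)^{\hf}(m,n)^{\hf})$. Mellin inversion then yields
\[
\sum_\ell \psi(\ell/X)\frac{S(m,-n,\ell)}{\ell} = \frac{1}{2\pi i}\int_{(\sigma)} \widehat{\psi}(u)\lt(\frac{X}{2\pi\sqrt{mn}}\rt)^u Z_{m,-n}\lt(\tfrac{u+1}{2}\rt)\d u
\]
for $\sigma > \hf$, with $\widehat{\psi}$ decaying rapidly in $|\Im u|$ and derivative bounds on $\psi$ producing a gain in powers of $Y$.

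Next, I would shift the contour to $\Re(u) = -\eta$ for some $0 < \eta < \hf$, indenting around the spectral poles of $Z_{m,-n}$ on the imaginary axis and stopping short of the zeta-zero poles on $\Re(u) = -\hf$. Crossing $u = 0$ picks up the residue $\widehat{\psi}(0)Z_{m,-n}(\hf)$, which, via the analytically continued spectral expansion of Theorem \ref{thm:spectralExpansion}, is bounded using the absolute convergence of the discrete sum (thanks to the exponential decay $\Gamma(it_j)\Gamma(-it_j) = \pi/(t_j\sinh(\pi t_j))$) together with the integrability of the continuous spectrum; Rankin-Selberg bounds on the Fourier coefficients then produce the $(mn)^{\qtr+\vep}$ contribution. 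Choosing $\eta > 2\theta$ additionally crosses the exceptional-spectrum poles at $u = \pm 2\sigma_j \in \R$, whose residues give the $X^\eta(mn)^\theta$ factor via Kim-Sarnak. The remaining integral on the shifted line is estimated by combining the growth bounds from Section \ref{sec:growth} with the decay of $\widehat{\psi}$; balancing the saving $X^{-\eta}$ and optimizing $Y$ and $\eta$ yields the $X^{\frac{1}{6}+\vep}$ exponent characteristic of Kuznetsov-type bounds on sums of Kloosterman sums.

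The main obstacle is converting the vertical-line growth estimates on $Z_{m,-n}$ into a bound sharp enough to produce the $X^{1/6}$ exponent. This requires the spectral large sieve to control averages of $|\rho_j(m)\rho_j(-n)|$ uniformly in $t_j$, and a separate treatment of the exceptional-eigenvalue contribution whose ultimate size is governed by $\theta$; the $(m,n)^\vep$ versus $(mn)^\theta$ dichotomy in the stated bound reflects the splitting of the discrete spectral sum into tempered and exceptional parts.
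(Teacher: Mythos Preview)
Your approach diverges from the paper's, and carries genuine gaps.

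The paper's argument is much simpler: it applies the classical (sharp-cutoff) Perron formula truncated at height $T$, and shifts the contour only to $\sigma_0 = \hf + \vep$, never crossing $\Re(s) = \hf$, so \emph{no residues are collected at all}. The Perron truncation error is $O\!\bigl(X^{\hf+\vep}(m,n)^\vep/T\bigr)$ via the Weil bound. On the shifted vertical line the estimate \eqref{eq:largerThanHalfBound}, namely $Z_{m,-n}(s) \ll (mn)^\theta(1+|t|)^{\hf} + (mn)^{\qtr + \vep}$, makes the vertical integral $O\!\bigl(X^\vep((mn)^\theta T^{\hf} + (mn)^{\qtr+\vep}\log T)\bigr)$; the horizontal segments are handled by the Phragm\'en--Lindel\"of bound \eqref{eq:phragmenLindelof}. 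Balancing $X^{\hf}/T$ against $T^{\hf}$ gives $T = X^{1/3}$ and hence the exponent $X^{1/6}$. The contour never enters $\Re(s) \le \hf$, so the spectral poles, the reciprocal zeta function in $R_0^-(s)$, and the poor bounds \eqref{eq:firstCriticalBound} in that strip are entirely avoided. No spectral large sieve and no Rankin--Selberg input are used.

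Your plan to shift to $\Re(u) = -\eta$ (i.e.\ $\Re(s) < \hf$) runs into real obstacles. First, the line $\Re(s) = \hf$ carries infinitely many simple poles at $s = \hf + it_j$ (Proposition~\ref{prop:criticalPoles}); ``indenting around'' all of them is not a well-defined contour shift, and you give no argument for the convergence of the resulting residue sum. Second, you propose bounding the residue at $u=0$ via the spectral expansion of Theorem~\ref{thm:spectralExpansion} evaluated at $s = \hf$, but that expansion is asserted only for $\Re(s) < 0$; the discrete sum $Z_d(s)$ is not known to converge at $s = \hf$. Third, the growth of $Z_{m,-n}$ in the strip $0 < \sigma < \hf$ is far worse than to the right: by \eqref{eq:firstCriticalBound} the unconditional bound carries a factor $(1+|t|)^{A\log\log(10+|t|)}$ from $1/\zeta(2s)$, which would overwhelm the saving $X^{-\eta}$. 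Fourth, $\SL(2,\Z)$ has no exceptional eigenvalues, so the ``exceptional-spectrum poles at $u = \pm 2\sigma_j \in \R$'' you invoke do not exist; the $(mn)^\theta$ in the theorem arises from the pointwise Hecke bound inside \eqref{eq:largerThanHalfBound}, not from residues.
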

	
	This sum has been studied in \cite{sarnak2009linnik} for the case $mn>0$. Sarnak and Tsimerman obtained strong estimates for this sum using a careful study of uniform
	asymptotic expansions of the Bessel functions which arise in Kuznetsov trace formula. Possibly their methods could be adapted to obtain a better bound in the opposite
	sign case as well.

	Let us also record the spectral expansion of the same sign Kloosterman sum zeta function. The following spectral expansion on the Kloosterman sum 
	zeta function is due to Kuznetsov and a reworking of it can be found in Motohashi's paper \cite{motohashi1995kloosterman}. We also include the expression of the 
	meromorphic continuation to all of $\C$ for completeness.

	\begin{theorem}[Kuznetsov]	\label{thm:kuznetsovTrace}
		Define
		\[
			Z_{m,n}(s) = (2\pi\sqrt{mn})^{2s-1}\sum_{\ell=1}^\infty \frac{S(m,n;\ell)}{\ell^{2s}} 
		\]
		to be the Kloostermann sum zeta function. Then if $\Re(s) >\hf$ one has the following spectral expansion for any positive integer $m$ and $n$:
		\begin{align*}
			Z_{m,n}(s)&= \hf\sin(\pi s) \sum_{j=1}^\infty \frac{\rho_j(n)\rho_j(m)}{\cosh(\pi t_j)}\Gamma\lt(s-\tfrac12 +it_j\rt)\Gamma\lt(s-\tfrac12 -it_j\rt)\\
			&+\frac{1}{2\pi}\sin(\pi s)\int_{-\infty}^\infty \frac{\sigma_{2it}(m)\sigma_{2it}(n)}{(mn)^{it}|\zeta(1+2it)|^2}
			\Gamma\lt(s-\tfrac12 + it\rt)\Gamma\lt(s-\tfrac12-it\rt)\d t\\
			&+\sum_{k=1}^\infty p_{m,n}(k) \frac{\Gamma(k-1 + s)}{\Gamma(k+1-s)} - \frac{1}{2\pi}\delta_{m,n} \frac{\Gamma(s)}{\Gamma(1-s)}
		\end{align*}
		where 
		\begin{align*}
			p_{m,n}(k) &= (2k-1)\sum_{\ell=1}^\infty \frac{S(m,n;\ell)}{\ell}J_{2k-1}\lt(\frac{4\pi \sqrt{mn}}{\ell}\rt)\\
			&=(-1)^k\frac{\Gamma(2k)}{(4\pi\sqrt{mn})^{2k-1}}\sum_{f \in \mathcal{F}_{2k}}\frac{a_f(m)a_f(n)}{\langle f, f\rangle}  
			- \frac{(-1)^k}{2\pi}\delta_{m,n}.
		\end{align*} 
		Here $\mathcal{F}_{2k}$ is a basis of cuspidal Hecke-forms of weight $2k$ on $\SL(2,\Z)$, and $a_f(n)$ is the $n$\th\ Fourier coefficient of $f$. In order to
		refer to this symbol easily let us call the first second and third lines of the above formula by $Z_d^+(s) = Z_{m,n}^d(s), Z_c^+(s) = Z_{m,n}^+(s)$ and 
		$Z_{hr}(s) = Z_{m,n}^{hr}(s)$ respectively. 
		
		This spectral expansion has a meromorphic continuation to all $s \in \C$. However the spectral expansion is not given by the same formula as above. The 
		integral has an integrand which has a pole if $\Re(s) = \hf- r$ for $r\in \N$. We need to add correctional terms in order to obtain the meromorphic continuation.
		If $R\geq 0$ is an integer and $s$ lies in the region $R - \hf < \Re(s) < R + \hf$, then additional terms $\sum_{r =0}^R R_r^+(s)$ are included, where
		\[
			R_r^+(s)= (-1)^r R_r^-(s).
		\]
		Finally in case $\Re(s) = \hf-R$ for some positive integer $R$, the integral in the ``continuous spectrum'' is replaced by the following path integeral,
		\[
			\frac{\sin(\pi s)}{2\pi i}\int_{\mathcal{C}}\frac{\sigma_{2u}(m)\sigma_{2u}(n)}{(mn)^{u}\zeta(1+2u)\zeta(1-2u)}
			\Gamma\lt(s-\tfrac12 + u\rt)\Gamma\lt(s-\tfrac12-u\rt)\d t,
		\]
		where the contour $\mathcal{C}$ is as above and and we add half of the $r = R$ term.
	\end{theorem}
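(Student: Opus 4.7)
The strategy follows Motohashi's realization in \cite{motohashi1995kloosterman} of $Z_{m,n}(s)$ as a regularized Petersson inner product of real-analytic Poincar\'e series. For $\Re(s) > 1$, define
\[
P_m(z,s) = \sum_{\gamma \in \Gamma_\infty \backslash \SL(2,\Z)} \Im(\gamma z)^s\, e(m \gamma z).
\]
Unfolding the inner product $\langle P_m(\cdot, s_1), \overline{P_n(\cdot, \bar s_2)}\rangle$ against the Fourier expansion of $P_n$ expresses it as the Dirichlet series \eqref{eq:zetaDirichlet} multiplied by explicit Gamma factors, since the nonzero Fourier coefficients of $P_m$ are Kloosterman sums weighted by $K$-Bessel integrals. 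Expanding the same inner product via the spectral decomposition of $L^2(\SL(2,\Z)\backslash\H)$ yields a discrete sum over Maass cusp forms $u_j$, a continuous integral over unitary Eisenstein series, and a holomorphic contribution arising from the projection of $P_m$ onto spaces of weight-$2k$ holomorphic cusp forms. Matching the two computations and performing the Mellin transform in $s$ of the Whittaker/Bessel kernels produces $Z_d^+(s)$, $Z_c^+(s)$, and $Z_{hr}(s)$. The $\sin(\pi s)$ prefactor emerges from the reflection identity applied to the Mellin transform of $J_{2it}(x) - J_{-2it}(x)$, while the term $-\frac{1}{2\pi}\delta_{m,n}\Gamma(s)/\Gamma(1-s)$ is the identity double-coset contribution in the Petersson formula for weight $2k$.

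For the meromorphic continuation into $\Re(s) \leq \hf$, one examines the continuous-spectrum integrand
\[
\frac{\sigma_{2u}(m)\sigma_{2u}(n)\cos(\pi u)}{(mn)^u \zeta(1+2u)\zeta(1-2u)}\,\Gamma(s-\hf+u)\Gamma(s-\hf-u),
\]
whose simple poles at $u = \pm(\hf-s-r)$ for $r = 0, 1, 2, \ldots$ come from the Gamma factors. As $\Re(s)$ decreases through $\hf-r$, the $r$\th\ symmetric pair of poles crosses the vertical contour $\Re(u) = 0$; pushing the contour back to $(0)$ collects their residues. At $u = \hf-s-r$ one has $\cos(\pi u) = (-1)^r\sin(\pi s)$, and combined with the identity
\[
\zeta^*(2s+2r)\zeta^*(2-2s-2r) = \frac{\zeta(2s+2r)\zeta(2-2s-2r)}{(-1)^r \sin(\pi s)},
\]
(a consequence of $\Gamma(z)\Gamma(1-z) = \pi/\sin(\pi z)$ and the functional equation for $\zeta$) the summed residue from both poles assembles into exactly $R_r^+(s)$. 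On a critical line $\Re(s) = \hf - R$ a pole lies on $(0)$ itself, so one replaces the vertical contour by the logarithmic contour $\mathcal{C}$ of the statement and picks up half of the boundary residue.

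The main technical obstacle is verifying absolute convergence of each spectral piece in the appropriate region: the Maass sum is controlled via Weyl's law for $\{t_j\}$ together with the exponential decay of $|\Gamma(s-\hf\pm it_j)|$; the Eisenstein integral is handled by the standard polylogarithmic bound $1/|\zeta(1+2it)| \ll \log(2+|t|)$; and the holomorphic series $\sum_k p_{m,n}(k)\,\Gamma(k-1+s)/\Gamma(k+1-s)$ converges by Deligne's bound on $a_f(n)$ applied to the Petersson expression for $p_{m,n}(k)$, combined with Stirling's ratio estimate for the quotient of Gammas as $k \to \infty$. A secondary subtlety is the passage to the diagonal $s_1 = s_2 = s$ in the Poincar\'e inner product, since the raw unfolded expression converges absolutely only in a restricted two-variable region; this is handled by the analytic-continuation-in-two-variables argument already present in \cite{motohashi1995kloosterman}.
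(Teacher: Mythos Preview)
The paper does not actually prove this theorem: it is quoted as a known result of Kuznetsov, with a reference to Motohashi \cite{motohashi1995kloosterman} for the Poincar\'e-series derivation, and the only argument the paper supplies is the one-line remark after the statement explaining how the $\sin(\pi s)$ prefactor is absorbed into the completed zeta functions when computing the contour-shifted residues $R_r^+$. Your outline of the Poincar\'e inner-product method is precisely Motohashi's approach, so on that level your proposal matches what the paper invokes.

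There is, however, a concrete slip in your treatment of the continuation. You wrote the continuous-spectrum integrand with a factor $\cos(\pi u)$, but that is the \emph{opposite}-sign integrand from $I(s,\Omega)$ in Theorem~\ref{thm:spectralExpansion}. In the same-sign case the integrand carries no $\cos(\pi u)$; instead the whole integral is multiplied by the constant-in-$u$ prefactor $\sin(\pi s)$ (compare the displayed contour integral at the end of the theorem statement). With the correct integrand the residue at $u=\tfrac12-s-r$ contributes $\sin(\pi s)$ rather than $(-1)^r\sin(\pi s)$, and after your identity
\[
\zeta(2s+2r)\zeta(2-2s-2r)=(-1)^r\sin(\pi s)\,\zeta^*(2s+2r)\zeta^*(2-2s-2r)
\]
one is left with an extra factor $(-1)^r$, which is exactly what distinguishes $R_r^+(s)=(-1)^r R_r^-(s)$ from $R_r^-(s)$. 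As written, your residue computation would produce $R_r^-(s)$ and miss that sign. This is the content of the paper's short remark that
\[
\sin(\pi s)=(-1)^r\sin(\pi(s+r))=\frac{(-1)^r\pi}{\Gamma(s+r)\Gamma(1-s-r)}.
\]
Apart from this integrand confusion, your contour-shifting description and the convergence checks are correct and in line with what the paper cites.
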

	
	In obtaining the additional terms $R_r^+$ we have used the formula,
	\[
		\sin(\pi s) = (-1)^r \sin(\pi(s + r)) = \frac{(-1)^r \pi }{\Gamma(s + r) \Gamma(1-s-r)}
	\]
	changing the Riemann zeta functions in the denominator to completed ones. 


	\section{Expressing $Z_{m,-n}(s)$ in terms of $\widetilde{Z}_{m,-n}(s)$ and vice versa}\label{sec:linearCombinations}
	
		The function $\widetilde{Z}_{m,-n}(s)$ contains all the necessary information about the un-modified Kloosterman sum zeta function $Z_{m,n}(s)$. In fact we are going
		to extract the latter from a summation of the former. Let us express the exponential factor as a Taylor series around $0$.
	
		\begin{prop}\label{prop:ZTildeExpansion}
			For $\Re(s) >3/4$, we have the equality
			\[
				\widetilde{Z}_{m,-n}(s) = \sum_{k=0}^\infty \frac{(-2)^k}{k!} Z_{m,-n} (s + k/2) .
			\]
		\end{prop}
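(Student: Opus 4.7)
The plan is a direct computation: expand the exponential in $\widetilde{Z}_{m,-n}(s)$ as a Taylor series and interchange the order of summation.

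First I would write
\[
\exp(-4\pi\sqrt{mn}/\ell) = \sum_{k=0}^\infty \frac{(-1)^k (4\pi\sqrt{mn})^k}{k!\, \ell^k}
\]
and plug this into the definition \eqref{eq:modifiedZetaDirichlet}. Since $(4\pi\sqrt{mn})^k = 2^k (2\pi\sqrt{mn})^k$, pulling out a factor $(2\pi\sqrt{mn})^k$ and combining it with the prefactor $(2\pi\sqrt{mn})^{2s-1}$ yields $(2\pi\sqrt{mn})^{2(s+k/2)-1}$, which is exactly the prefactor for $Z_{m,-n}(s+k/2)$. Swapping the sums over $\ell$ and $k$ then groups the $\ell$-sum into the Dirichlet series defining $Z_{m,-n}(s+k/2)$, leaving precisely
\[
\widetilde{Z}_{m,-n}(s) = \sum_{k=0}^\infty \frac{(-2)^k}{k!} Z_{m,-n}(s+k/2).
\]

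The only real step in this proposal is justifying the interchange of summation, which amounts to checking absolute convergence of the double sum for $\Re(s)>3/4$. By Weil's bound $|S(m,-n;\ell)| \ll_\vep \ell^{1/2+\vep}$, the modulus of the double sum is bounded by
\[
\sum_{\ell=1}^\infty \frac{|S(m,-n;\ell)|}{\ell^{2\Re(s)}} \sum_{k=0}^\infty \frac{(4\pi\sqrt{mn})^k}{k!\, \ell^k}
\ll_\vep \sum_{\ell=1}^\infty \frac{\ell^{1/2+\vep}}{\ell^{2\Re(s)}} \exp\!\lt(\frac{4\pi\sqrt{mn}}{\ell}\rt).
\]
The exponential factor tends to $1$ as $\ell \to \infty$ and is bounded for each fixed $\ell$, so the outer sum converges precisely when $2\Re(s) - \tfrac12 - \vep > 1$, i.e.\ for $\Re(s) > 3/4$ after taking $\vep$ small. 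This is the hypothesis of the proposition, so Fubini applies and the interchange is legitimate.

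The main obstacle, such as it is, is to have the right decay on the Kloosterman sum side; the trivial bound $|S(m,-n;\ell)|\leq \ell$ would only give $\Re(s)>1$, so Weil's bound is essential to reach the stated range $\Re(s) > 3/4$. Once absolute convergence is in hand the algebra is completely formal.
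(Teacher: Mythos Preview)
Your proof is correct and follows exactly the same approach as the paper: expand the exponential as a Taylor series and swap the order of summation. In fact you supply more detail than the paper does, which simply performs the formal manipulation without explicitly invoking Weil's bound to justify the interchange in the range $\Re(s)>3/4$.
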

	
		\begin{proof}
			Expand the exponential function in a Taylor series and exchange orders of summation.
			\begin{align*}
				\widetilde{Z}_{m,-n}(s) &= (2\pi \sqrt{mn})^{2s-1} \sum_{\ell=1}^\infty \frac{S(m,-n;\ell)}{\ell^{2s}} \exp(-4\pi \sqrt{mn}/\ell) \\
				&=(2\pi \sqrt{mn})^{2s-1} \sum_{\ell=1}^\infty \frac{S(m,-n;\ell)}{\ell^{2s}} \sum_{k=0}^\infty \frac{1}{k!} \lt(-\frac{4\pi \sqrt{mn}}\ell \rt)^k\\
				&= \sum_{k=0}^\infty \frac{(-2)^k}{k!} (2\pi \sqrt{mn})^{2s + k -1} \sum_{\ell=1}^\infty \frac{S(m,-n;\ell)}{\ell^{2s + k}} \\
				&= \sum_{k=0}^\infty \frac{(-2)^k}{k!} Z_{m,-n}\lt(s + \frac{k}{2} \rt).\qedhere
			\end{align*}
		\end{proof}
	
		Now let us write $\widetilde{Z}_{m,-n}(s + k/2)$ in terms of $Z_{m,-n}$'s. We find a linear combination of the former that yields $Z_{m,-n}(s)$. 
		\[
			\begin{matrix}
				\widetilde{Z}_{m,-n}(s) \mspace{-4mu} & = & \mspace{-4mu} Z_{m,-n}(s) \mspace{-4mu}& -& \mspace{-4mu}\frac{2^1}{1!} Z_{m,-n}(s + \tfrac12)
				\mspace{-4mu}& +&\mspace{-4mu} \frac{2^2}{2!}Z_{m,-n} (s + 1) \mspace{-4mu}& - &\mspace{-4mu}\cdots \\
				\widetilde{Z}_{m,-n}(s+\tfrac12)\mspace{-4mu} & = & \mspace{-4mu}\mspace{-4mu} & & \mspace{-4mu} Z_{m,-n}(s + \tfrac12)
				\mspace{-4mu}& -& \mspace{-4mu}\frac{2^1}{1!}Z_{m,-n} (s + 1) \mspace{-4mu}&  + & \mspace{-4mu}  \cdots\\
				\widetilde{Z}_{m,-n}(s+1) \mspace{-4mu}& = &\mspace{-4mu} \mspace{-4mu} & &\mspace{-4mu} \mspace{-4mu} & & \mspace{-4mu}Z_{m,-n} (s + 1)
				\mspace{-4mu} & - & \mspace{-4mu}\cdots
			\end{matrix} 
		\]
	
		Notice that the combination $\widetilde{Z}_{m,-n}(s) + 2\widetilde{Z}_{m,-n}(s+\tfrac12)$ cancels the second column. The third column is also killed with the 
		linear combination $\widetilde{Z}_{m,-n}(s) + 2\widetilde{Z}_{m,-n}(s+\tfrac12) + 2^2/2! \widetilde{Z}_{m,-n}(s+1)$.
		\begin{prop}\label{prop:linearCombination}
			For $\Re(s)>3/4$, we have the equality, 
			\begin{equation}\label{eq:linearCombination}
				Z_{m,-n}(s) = \sum_{k=0}^\infty \frac{2^k}{k!} \widetilde{Z}_{m,-n}\lt(s + \tfrac k2 \rt).
			\end{equation}
		\end{prop}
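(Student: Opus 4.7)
The plan is to run the computation of Proposition~\ref{prop:ZTildeExpansion} in reverse: substitute the definition \eqref{eq:modifiedZetaDirichlet} of $\widetilde{Z}_{m,-n}$ into the right-hand side of \eqref{eq:linearCombination}, interchange the sum over $k$ with the sum over $\ell$, and recognize that the resulting inner series over $k$ is precisely $\exp(4\pi\sqrt{mn}/\ell)$, which cancels the $\exp(-4\pi\sqrt{mn}/\ell)$ built into $\widetilde{Z}_{m,-n}$. The surviving expression is exactly the Dirichlet series $Z_{m,-n}(s)$.

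More concretely, after substitution the right-hand side becomes
\[
\sum_{k=0}^\infty \frac{2^k}{k!}(2\pi\sqrt{mn})^{2s+k-1}\sum_{\ell=1}^\infty \frac{S(m,-n;\ell)}{\ell^{2s+k}}\exp(-4\pi\sqrt{mn}/\ell).
\]
Swapping the order of summation and pulling out the $\ell$-dependent factor leaves
\[
(2\pi\sqrt{mn})^{2s-1}\sum_{\ell=1}^\infty \frac{S(m,-n;\ell)}{\ell^{2s}}\exp(-4\pi\sqrt{mn}/\ell)\sum_{k=0}^\infty \frac{1}{k!}\left(\frac{4\pi\sqrt{mn}}{\ell}\right)^{k},
\]
and the inner exponential series cancels the damping factor, yielding $Z_{m,-n}(s)$.

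The main obstacle is justifying the interchange of the two summations. For this I would show absolute convergence of the double series in the region $\Re(s)>3/4$. Taking absolute values and performing the $k$-sum first (which is unconditional, as it produces $e^{4\pi\sqrt{mn}/\ell}$ and cancels the exponential decay in $\widetilde{Z}$), one is reduced to checking absolute convergence of
\[
\sum_{\ell=1}^\infty \frac{|S(m,-n;\ell)|}{\ell^{2\Re(s)}},
\]
which follows from Weil's bound $|S(m,-n;\ell)|\ll_\vep \ell^{1/2+\vep}(m,n,\ell)^{1/2}$ exactly in the range $\Re(s)>3/4$. Fubini's theorem then legitimizes the formal manipulation above.

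As a sanity check one can derive the identity purely formally from Proposition~\ref{prop:ZTildeExpansion}: substituting that expansion into the right-hand side of \eqref{eq:linearCombination} and collecting the coefficient of $Z_{m,-n}(s+N/2)$ gives $\frac{1}{N!}\sum_{k=0}^{N}\binom{N}{k}2^{k}(-2)^{N-k}=\frac{(2-2)^{N}}{N!}$, which vanishes for $N\geq 1$ and equals $1$ for $N=0$. This is the combinatorial content of the triangular inversion displayed just before the proposition, and it confirms that the chosen coefficients $2^{k}/k!$ are indeed the inverses of the coefficients $(-2)^{k}/k!$ appearing in Proposition~\ref{prop:ZTildeExpansion}.
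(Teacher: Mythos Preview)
Your proposal is correct. The main argument you give---substituting the Dirichlet series \eqref{eq:modifiedZetaDirichlet} for $\widetilde{Z}_{m,-n}$, swapping the $k$- and $\ell$-sums, and letting the inner $k$-series rebuild $\exp(4\pi\sqrt{mn}/\ell)$ to cancel the damping factor---is a genuinely different route from the paper's. The paper instead takes exactly the approach you relegate to a ``sanity check'': it plugs Proposition~\ref{prop:ZTildeExpansion} into the right-hand side, collects the coefficient of $Z_{m,-n}(s+k/2)$, and kills it for $k\geq 1$ via the binomial identity $\sum_{i+j=k}\frac{2^i}{i!}\frac{(-2)^j}{j!}=\frac{(2-2)^k}{k!}=0$.

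Your direct Dirichlet-series argument has the advantage of making the analytic justification transparent: the Fubini step is cleanly reduced to the absolute convergence of $\sum_\ell |S(m,-n;\ell)|\ell^{-2\Re(s)}$, which Weil's bound delivers for $\Re(s)>3/4$. The paper's combinatorial inversion, by contrast, implicitly rearranges a double sum over $i$ and $j$ when ``collecting columns'' and does not spell out why this is legitimate (it would require the same absolute-convergence check). On the other hand, the paper's route makes the inversion structure explicit---it shows that $2^k/k!$ and $(-2)^k/k!$ are formal inverses---which is conceptually useful later when the identity is applied outside the region of absolute convergence of the Dirichlet series.
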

	
		\begin{proof}
			We express the sum \eqref{eq:linearCombination} in terms of the columns (as we have written above) in their expansion in terms of the $Z_{m,n}$ functions:
			\[
				\sum_{i=0}^\infty \frac{2^i}{i!} \widetilde{Z}_{m,-n}\lt(s + \tfrac i2 \rt) = Z_{m,-n} (s) + \sum_{k=1}^\infty Z_{m,n}\lt(s + \tfrac k2\rt) 
				\sum_{i+j = k} \frac{(2)^{i}}{i!}\frac{(-2)^{j}}{j!}.
			\]
			The innermost summation simply vanishes:
			\[
				2^k \sum_{i + j = k} \frac{(-1)^i}{i!j!} = \frac{2^k}{k!} \sum_{i + j= k} \frac{k!}{i!j!} (-1)^i = \frac{2^k}{k!}(1-1)^k = 0.
			\]
			This yields the formula.
		\end{proof}

		We have expressed the function $Z_{m,-n}(s)$ as an infinite linear combination of $\widetilde{Z}_{m,-n}(s)$. Motohashi, in \cite{motohashi1997spectral} gives a spectral 
		expansion of this latter modified Kloos{\-}terman sum zeta function for $\Re(s) >\frac 34$, as follows:
		\begin{align}\label{eq:modifiedZetaSpectralExpansion}
			\widetilde{Z}_{m,-n}(s) &= \sum_{j=1}^\infty \overline{\rho_j(m)}\rho_j(-n)  2^{3-4s} \sqrt{\pi} \frac{\Gamma(2s-1 + 2it_j)\Gamma(2s-1-2it_j)}{\Gamma(2s-\hf)}\\
			&+ \frac{2^{3-4s}}{\sqrt{\pi}} \int\limits_{-\infty }^{\infty } \frac{\sigma_{2it}(m) \sigma_{2it}(n) \cosh(\pi t)}{(mn)^{it} |\zeta(1 + 2it)|^2}  
			\frac{\Gamma(2s - 1 + 2it) \Gamma(2s-1 - 2it) }{\Gamma(2s-\hf)} \d t.\notag
		\end{align}
	
		This spectral expansion gives an analytic continuation of $\widetilde{Z}_{m,-n}(s)$, initially to $\Re(s)>\hf$, and then a meromorphic continuation to all $s\in \C$. 
		Notice, that although the infinite summation and the integral converges for all $s\in \C$ not on a pole of the summands or the integrand, this does not mean that 
		the analytic continuation of $\widetilde{Z}_{m,-n}(s)$ is given by the same formula as in \eqref{eq:modifiedZetaSpectralExpansion}. The poles of the integrand of the continuous spectrum separate 
		the regions where $s$ is allowed to be in. So the analytic continuation of the expression will be given by another formula. That is done in Theorem 
		\ref{thm:modifiedZetaSpectralExpansion} below. 
	
		Before that, let us see that \eqref{eq:linearCombination} gives us a meromorphic continuation to all $s \in \C$. 
		\begin{proof}[Proof of meromorphic continuation of $Z_{m,-n}(s)$]
			For $\Re(s) >1$, the function $\widetilde{Z}_{m,-n}(s)$ is given by its Dirichlet series \eqref{eq:modifiedZetaDirichlet}, 
			and we can bound it by $\zeta(3/2) <3$. Therefore the summation
			\[
				\sum_{k = 0}^\infty \frac{2^k}{k!} \widetilde{Z}_{m,-n} \lt(s + \frac k2\rt) 
			\]
			converges absolutely. Except for finitely many terms, the terms of the summation can be bounded by 
			$2\sum_{k \text{ s.t. } \Re(s + k/2)>1} \frac{2^k}{k!} <\infty$. In fact this shows that the convergence is uniform in compact sets of 
			$s \in \C$ away from the poles of $\widetilde{Z}_{m,-n}(s + k/2)$. 

			Furthermore for $\Re(s) >3/4$, i.e. in the region of absolute convergence of these Dirichlet series, we have equality. So the infinite summation over $k$ 
			gives a meromorphic continuation of $Z_{m,-n}(s)$ to all $s$. 
		\end{proof}
	

	\section{Spectral Expansions}\label{sec:kuznetsovSpectralExpansions}

		The above proof used that $\widetilde{Z}_{m,-n}$ had meromorphic continuation to all $s \in \C$. We write below the expression giving the function in various domains.
		
		\begin{theorem} \label{thm:modifiedZetaSpectralExpansion}
			Let 
			\[
				\widetilde{Z}_d(s) = \sum_{j=1}^\infty \overline{\rho_j(m)} \rho_j(-n) 2^{3-4s}\sqrt{\pi} 
				\frac{\Gamma(2s - 1 +2it_j)\Gamma(2s-1 -2it_j)}{\Gamma(2s-\tfrac12)},
			\]
			and
			\[
				\widetilde{I}(s,\Omega) \mspace{-2mu} = \mspace{-2mu} \frac{2^{4-4s}\sqrt{\pi}}{2\pi i } \int_{\Omega} 
				\frac{\sigma_{2u}(m) \sigma_{2u}(n)\cos(\pi u)}{(mn)^u \zeta(1 + 2u) \zeta(1-2u)}   
				\frac{\Gamma(2s \mspace{-3mu}-\mspace{-3mu} 1\mspace{-3mu} +\mspace{-3mu}2u) \Gamma(2s \mspace{-3mu}-\mspace{-3mu} 1\mspace{-3mu} -\mspace{-3mu} 2u)}
				{\Gamma(2s - \tfrac12)}\d u.
			\]
			where $\Omega$ is a path from $-i\infty$ to $i\infty$ on $\C$. Let $(0)$ be the vertical path $\Re(u) = 0$, and $\mathcal{C}$ be the path
			$\Re(u) = \frac{1}{20 \log|\Im(u) + 2|}$. This region is chosen so that there are no zeros of $\zeta(1-2u)$ between the curves $(0)$ and $\mathcal{C}$. 
			Let us also call, for brevity, $\widetilde{I}(s,(0)) = \widetilde{I}(s)$. 
		
			The spectral expansion of $\widetilde{Z}_{m,-n}(s)$ in the region $\Re(s)\geq \hf$ is given by 
			\[
				\widetilde{Z}_{m,-n}(s) = \widetilde{Z}_d(s) + \widetilde{I}(s,(0)).
			\] 
			Let us call
			\begin{equation}\label{eq:modifiedZetaResidualTerm}
				\widetilde{R}_r(s) :=\frac{(-1)^r}{r!}  \frac{2^{3-4s}\sqrt{\pi} \sigma_{2s-1+r}(m)\sigma_{2s-1+r}(n)}{(mn)^{s-\hf+\frac r2}\zeta^*(2s+r) \zeta^*(2-2s-r)}
				\frac{\Gamma(4s-2+r)}{\Gamma(2s -\tfrac12)}.
			\end{equation}
			Then for $s$ such that $-\frac{R}{2}<\Re(s)<-\frac{R}{2} +\hf$, 
			\begin{equation}\label{eq:modifiedZetaSpectralExpansionCompact}
				\widetilde{Z}_{m,-n}(s) = \widetilde{Z}_d(s) + \widetilde{I}(s) + \sum_{r=0}^R 2\widetilde{R}_r(s).
			\end{equation}
			If $\Re(s) = -\frac{R}{2} + \hf$,
			\begin{equation} \label{eq:modifiedZetaSpectralExpansionOnTheLine}
				\widetilde{Z}_{m,-n}(s) = \widetilde{Z}_d(s) + \widetilde{I}_c(s,\mathcal{C}) + \sum_{r=0}^{R-1} 2\widetilde{R}_r(s) + \widetilde{R}_R(s).
			\end{equation}
			The terms other than the discrete spectrum will be denoted by $\widetilde{Z}_c(s)$ and is called the continuous part of $\widetilde{Z}_{m,-n}(s)$. 
		\end{theorem}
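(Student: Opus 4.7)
I plan to start from Motohashi's spectral expansion \eqref{eq:modifiedZetaSpectralExpansion} of $\widetilde{Z}_{m,-n}(s)$, which under the substitution $u = it$ directly establishes the identity $\widetilde{Z}_{m,-n}(s) = \widetilde{Z}_d(s) + \widetilde{I}(s,(0))$ for $\Re(s) > \hf$. Under this change of variables one has $\cosh(\pi t) = \cos(\pi u)$, $|\zeta(1+2it)|^2 = \zeta(1+2u)\zeta(1-2u)$, $(mn)^{it} = (mn)^u$ and $dt = -i\, du$, and a direct bookkeeping converts the numerical prefactor $2^{3-4s}/\sqrt{\pi}$ into $2^{4-4s}\sqrt{\pi}/(2\pi i)$.

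The heart of the proof is the analytic continuation into $\Re(s) < \hf$ by contour shifting. The integrand has Gamma poles at $u = \pm(s-\hf) \pm k/2$ for $k \geq 0$; as $\Re(s)$ descends past the critical line $\Re(s) = \hf - r/2$ the pair $u_r^\pm := \pm(s - \hf + r/2)$ crosses the imaginary axis in opposite directions. Analytic continuation of $\widetilde{I}(s,(0))$ across such a line requires deforming $(0)$ so that both poles stay on their original sides, and by the residue theorem this produces a correction term for each pair. Using $\Res_{z=-r}\Gamma(z) = (-1)^r / r!$ I compute
\[
	\Res_{u=u_r^+}\Gamma(2s-1-2u) = -\frac{(-1)^r}{2\, r!}, \qquad \Res_{u=u_r^-}\Gamma(2s-1+2u) = +\frac{(-1)^r}{2\, r!}.
\]
The remaining factors $\sigma_{2u}(m)\sigma_{2u}(n)\cos(\pi u)\Gamma(2s-1\mp 2u)/((mn)^u\zeta(1+2u)\zeta(1-2u)\Gamma(2s-\hf))$ take the \emph{same} value at $u_r^+$ and $u_r^-$, using $\sigma_{-2u}(n) = n^{-2u}\sigma_{2u}(n)$ together with the $u \mapsto -u$ symmetry of $\cos$ and of $\zeta(1+2u)\zeta(1-2u)$. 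After accounting for the orientations of the two detours, the paired residues combine to add exactly $2\widetilde{R}_r(s)$ to the spectral expansion; the conversion into the closed form \eqref{eq:modifiedZetaResidualTerm} uses the identity $\cos(\pi(s - \hf + r/2)) = \sin(\pi(s + r/2))$, the reflection formula $\Gamma(s+r/2)\Gamma(1-s-r/2) = \pi/\sin(\pi(s+r/2))$, and the definition $\zeta^*(w) = \zeta(w)\Gamma(w/2)\pi^{-w/2}$ to absorb the $\cos$ factor and powers of $\pi$ into completed zetas in the denominator. Iterating for $r = 0, 1, \ldots, R$ gives \eqref{eq:modifiedZetaSpectralExpansionCompact}.

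For the boundary line $\Re(s) = -R/2 + \hf$, the pair $u_R^\pm$ lies exactly on the imaginary axis so $\widetilde{I}(s,(0))$ is ill-defined; I replace $(0)$ with the contour $\mathcal{C}$, which by construction lies in the zero-free region of $\zeta(1-2u)$ and has both $u_R^\pm$ to its left. Starting from the upper-strip formula and moving $(0) \to \mathcal{C}$, only the single pole $u_R^+$ is crossed (since $u_R^-$ stays to the left of $(0)$, hence to the left of $\mathcal{C}$), producing the single-residue identity $\widetilde{I}(s,(0)) = \widetilde{I}(s,\mathcal{C}) + \widetilde{R}_R(s)$; substituting this into the upper-strip expression and extending by continuity as $\Re(s) \to -R/2 + \hf$ yields \eqref{eq:modifiedZetaSpectralExpansionOnTheLine}. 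The main obstacle I anticipate is the careful sign and orientation bookkeeping: the $\Gamma$-pole residues at $u_r^+$ and $u_r^-$ have opposite signs and the two detours have opposite orientations, so the signs must combine to give exactly $+2\widetilde{R}_r(s)$ in the strips and only $+\widetilde{R}_R(s)$ on the line; a secondary, mostly routine, technical point is justifying the horizontal closures at $\Im(u) = \pm\infty$, which follows from Stirling bounds on $\Gamma(2s-1\pm 2u)/\Gamma(2s-\hf)$, divisor bounds on $\sigma_{2u}(n)$, and standard bounds on $1/\zeta(1-2u)$ along $\mathcal{C}$.
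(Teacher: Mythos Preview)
Your proposal is correct and follows essentially the same approach as the paper: both start from Motohashi's expansion \eqref{eq:modifiedZetaSpectralExpansion}, analytically continue $\widetilde{I}(s,(0))$ by shifting the $u$-contour across the paired Gamma poles $u_r^\pm = \pm(s-\tfrac12+\tfrac r2)$, convert the resulting $\cos(\pi u)$ factor into completed zeta functions via the reflection formula, and handle the boundary lines $\Re(s) = -\tfrac{R}{2}+\tfrac12$ by routing through the auxiliary contour $\mathcal{C}$. The only cosmetic difference is that the paper performs the crossing as a two-step move $(0)\to\mathcal{C}\to(0)$ (picking up one copy of $\widetilde{R}_r(s)$ at each step), whereas you treat the two symmetric poles $u_r^\pm$ together and invoke the $u\mapsto -u$ symmetry of the integrand to see directly that their contributions coincide; the content is identical.
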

	
		\begin{proof}
			The spectral expansion \eqref{eq:modifiedZetaSpectralExpansion}, given in \cite{motohashi1997spectral}, for $\widetilde{Z}_{m,-n}(s)$ actually converges 
			in $\Re(s)>\hf$. In fact the sum converges as long as $s$ does not equal one of $\frac{r}{2} + \hf \pm it_j$ for a nonnegative integer $r$ and spectral 
			parameter $t_j$, and the integral converges as long as $s$ does not lie on the lines $\Re(s) = \hf - \frac{r}{2}$, $r\geq 0$. 
		
			However, these regions of $s$ are separated by the poles of the integrand as $u$ varies in $i\R$, and hence the same expression 
			$\widetilde{Z}_d(s) + \widetilde{I}(s)$ is not the analytic continuation of itself for $\Re(s)>\hf$ and $0<\Re(s)<\hf$.
		
			Now let $r \geq 0$ and $s$ lie between the curves $(-\frac{r}{2} + \hf)$ and $\mathcal{C} - \frac{r}{2} + \hf$. We may express,
			\begin{align*}
				\widetilde{I}(s)\msp\msp &= \msp \frac{(-1)^r}{r!}  \frac{2^{3-4s}\sqrt{\pi}\sigma_{2s-1+r}(m)\sigma_{2s-1+r}(n) 
				\cos(\pi (s\msp-\msp \tfrac12 \msp + \msp\tfrac r2))} {(mn)^{s-\hf+\frac r2}\zeta(2s+r) \zeta(2-2s-r)}\frac{\Gamma(4s\msp-\msp 2\msp +\msp r)}
				{\Gamma(2s -\tfrac12)} \\
				&\phantom{=} + \widetilde{I}_c(s,\mathcal{C}) \\ 
				&= \frac{(-1)^r}{r!}  \frac{2^{3-4s}\sqrt{\pi} \sigma_{2s-1+r}(m)\sigma_{2s-1+r}(n)}{(mn)^{s-\hf+\frac r2}\zeta^*(2s+r) \zeta^*(2-2s-r)} 
				\frac{\Gamma(4s-2+r)}{\Gamma(2s -\tfrac12)}\mspace{-3mu} +\mspace{-3mu} \widetilde{I}(s,\mathcal{C}) 
			\end{align*}
			where in the last line we have turned the cosine function into sine and that into a product of reciprocal gamma functions. Notice that 
			$\widetilde{I}(s,\mathcal{C})$ is an analytic function in the region where $s$ is between $-\mathcal{C} - \frac r2 + \hf$ and $\mathcal{C} - \frac{r}2 + \hf$. 
			Thus we get an analytic continuation of $\widetilde{I}_c(s)$ into this domain, which includes the line $\Re(s) = -\frac r2 + \hf$. Now if we assume 
			that $s$ is to the left of that line, when we move the line of integration back to $(0)$, we pass over the pole at $2s -1 +2u = -r$, 
			the residue is the same.
			\begin{align*}
				  \widetilde{I}(s) &= \frac{(-1)^r}{r!}  \frac{2^{3-4s}\msp \sqrt{\pi} \sigma_{2s-1+r}(m)\sigma_{2s-1+r}(n)}
				  {(mn)^{s-\hf+\frac r2}\zeta^*(2s+r) \zeta^*(2-2s-r)} \frac{\Gamma(4s\mspace{-2mu}-\mspace{-2mu}2+r)}{\Gamma(2s \mspace{-3mu}-\mspace{-3mu}\tfrac12)} 
				  \mspace{-3mu}+\mspace{-3mu} \widetilde{I}(s,\mathcal{C}) \\
				  &=2\frac{(-1)^r}{r!}  \frac{2^{3-4s}\sqrt{\pi}\sigma_{2s-1+r}(m)\sigma_{2s-1+r}(n)}{(mn)^{s-\hf+\frac r2}
				  \zeta^*(2s\mspace{-2mu}+\mspace{-2mu}r)  \zeta^*(2\mspace{-2mu}-\mspace{-2mu}2s\mspace{-2mu}-\mspace{-2mu}r)} 
				  \frac{\Gamma(4s\mspace{-3mu}-\mspace{-3mu}2\mspace{-3mu}+\mspace{-3mu}r)}{\Gamma(2s -\tfrac12)} \mspace{-3mu}
				  + \mspace{-3mu} \widetilde{I}(s).\qedhere
			\end{align*}
		\end{proof}
	
		We are going to plug in this spectral expansion into \eqref{eq:linearCombination} and obtain a proof of Theorem \ref{thm:spectralExpansion}.
	
		\begin{proof}[Proof of Theorem \ref{thm:spectralExpansion}]
			We take the infinite linear combination \eqref{eq:linearCombination} of $\widetilde{Z}_{m,-n}(s + k/2)$ expressing the function $Z_{m,-n}(s)$. This summation
			converges for all $s \in \C$ except for when $s + \frac{k}{2}$ lies on a pole of $\widetilde{Z}_{m,n}$ for some $k$. For each $\widetilde{Z}_{m,-n}(s + k/2)$ plug 
			in the spectral expansion in Theorem \ref{thm:modifiedZetaSpectralExpansion}. This will be a spectral expansion of $Z_{m,-n}(s)$. As it will turn out, 
			although \eqref{eq:linearCombination} converges for all $s \in \C$, the spectral expansion will only absolutely converge for $\Re(s)<0$. 
		
			Now assume that $r$ is a nonnegative integer and $-\frac{R}{2} < \Re(s) < -\frac{R}{2} +\hf$.
			\begin{align*}
				Z_{m,-n}(s) &= \sum_{k=0}^\infty  \frac{2^k}{k!} \widetilde{Z}_{m,-n}\lt(s + \tfrac k2 \rt) \displaybreak[0]\\
				&= \sum_{j=1}^\infty \overline{\rho_j(m)}\rho_j(-n)  2^{3-4s} \sqrt{\pi} \\
				&\bquad\times \sum_{k = 0}^\infty \frac{2^{-k}}{k!} \frac{\Gamma(2s + k-1 + 2it_j)\Gamma(2s + k -1-2it_j)}{\Gamma(2s + k -\hf)}\\
				&+ \frac{1}{\pi} \int_{-\infty }^{\infty } \frac{\sigma_{2it}(m) \sigma_{2it}(n)}{(mn)^{it} |\zeta(1 + 2it)|^2} 
				2^{3-4s} \sqrt{\pi} \cosh(\pi t)  \\
				&\bquad \times \sum_{k=0}^\infty \frac{2^{-k}}{k!}\frac{\Gamma(2s +k - 1 + 2it) \Gamma(2s + k -1 - 2it) }{\Gamma(2s+k -\hf)} \d t\\
				& + \sum_{k=0}^R \sum_{j=0}^{R-k} \frac{2^k}{k!} \widetilde{R}_j\lt(s + \frac{k}{2}\rt).
			\end{align*}	
		
			The summation over $k$ in the discrete and continuous spectra can be explicitly evaluated. Call the sum over these Gamma functions by
			$G(s)$.
			\begin{align*}
				G(s) &= \sum_{k=0}^\infty \frac{2^{-k}}{k!}\frac{\Gamma(2s +k - 1 + 2it) \Gamma(2s + k -1 - 2it) }{\Gamma(2s+k -\hf)} \\
				&= \frac{\Gamma(2s - 1 + 2it)\Gamma(2s - 1 - 2it)}{\Gamma(2s-\tfrac12)} {}_2F_1(2s-1+2it,2s-1-2it, 2s - \tfrac12, \tfrac12). 
			\end{align*}
			The Gauss hypergeometric function appears because the summation over $k$ is exactly its Taylor series expansion. where we have used the formula
			\begin{equation}\label{eq:hypergeometricSpecialValue}
				{}_2F_1\lt(a,b; \frac{a + b}{2} + \hf; \hf\rt)  
				= \frac{\sqrt{\pi} \Gamma\lt(\frac{a + b}{2} + \hf\rt)}{\Gamma\lt(\frac{a + 1}{2}\rt)\Gamma\lt(\frac{b+1}{2}\rt)}.
			\end{equation}		
			to evaluate the Gauss hypergeometric function at $1/2$, which can be found at \cite{GradsteynRhyzik}, \# 9.136.1.
			Legendre's duplication formula of the gamma function simplifies this expression to give
			\begin{align*}
				G(s) &=  \frac{\Gamma(2s - 1 + 2it)\Gamma(2s - 1 - 2it)}{\Gamma(2s-\tfrac12)} \sqrt{\pi} \frac{\Gamma(2s - \tfrac12)}{\Gamma(s+it)\Gamma(s-it)}\\
				&= \frac{2^{4s - 4}}{\sqrt{\pi}} \Gamma(s - \tfrac12 + it)\Gamma(s - \tfrac12 - it).
			\end{align*}
		
			Having established the summation over the discrete spectrum and the integral in the continuous spectrum we now turn our attention to 
			the additional residue terms that show themselves in studying the continuous spectrum with $\Re(s)<\hf$. We will make use of the following lemma.
			\begin{lemma}
			For $\widetilde{R}_j(s)$ defined as in \eqref{eq:modifiedZetaResidualTerm},
			\[
				\widetilde{R}_j(s) = \frac{(-1)^j}{j!} 4^j \frac{(2s-\tfrac12)_j}{(4s-2 + j)_j} \widetilde{R}_0\lt(s + \frac{j}{2}\rt),
			\]
			where $(z)_n = \Gamma(z + n)/\Gamma(z)$ is the Pochhammer symbol, or the rising factorial.
			\end{lemma}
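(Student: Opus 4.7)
The plan is to verify the identity by direct calculation, substituting the definition of $\widetilde{R}_r(s)$ into both sides and checking term-by-term agreement. First, I would expand $\widetilde{R}_0(s + j/2)$ by replacing $s$ with $s+j/2$ in the formula \eqref{eq:modifiedZetaResidualTerm}. Since $2(s+j/2) - 1 = 2s-1+j$ and $(s+j/2) - \hf = s - \hf + j/2$, the divisor sum $\sigma_{2s-1+j}$, the power of $mn$, and the two completed zeta factors $\zeta^*(2s+j)\zeta^*(2-2s-j)$ exactly match those in $\widetilde{R}_j(s)$ (which carries the index $r=j$).

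Next I would isolate the remaining differences, which live in the powers of $2$ and in the gamma factors. The exponent $3-4(s+j/2) = 3-4s-2j$ differs from $3-4s$ by $-2j$, producing a factor of $2^{2j} = 4^j$ in the ratio $\widetilde{R}_j(s)/\widetilde{R}_0(s+j/2)$. The gamma quotient becomes
\[
\frac{\Gamma(4s-2+j)/\Gamma(2s-\tfrac12)}{\Gamma(4s-2+2j)/\Gamma(2s+j-\tfrac12)} = \frac{\Gamma(2s+j-\tfrac12)}{\Gamma(2s-\tfrac12)} \cdot \frac{\Gamma(4s-2+j)}{\Gamma(4s-2+2j)},
\]
which by the definition $(z)_n = \Gamma(z+n)/\Gamma(z)$ equals $(2s-\tfrac12)_j/(4s-2+j)_j$. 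Combining this with the $4^j$ and the prefactor $(-1)^j/j!$ already present in the definition of $\widetilde{R}_j(s)$ (but absent in $\widetilde{R}_0$) yields the claimed identity.

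There is no real obstacle here; the lemma is a bookkeeping identity. The only point requiring care is lining up the shifts correctly, in particular recognizing that the argument shift $s \mapsto s + j/2$ turns $4s-2$ into $4s-2+2j$ and $2s-\tfrac12$ into $2s+j-\tfrac12$, which is exactly what produces the two Pochhammer symbols of length $j$.
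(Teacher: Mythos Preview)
Your proposal is correct and follows essentially the same approach as the paper: both proceed by direct substitution of $s \mapsto s + j/2$ into the definition of $\widetilde{R}_0$, observe that the divisor sums, powers of $mn$, and completed zeta factors match those in $\widetilde{R}_j(s)$, and then identify the leftover ratio of powers of $2$ and gamma factors as $4^j (2s-\tfrac12)_j/(4s-2+j)_j$. There is no meaningful difference in method.
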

	
			\begin{proof}
				We simply realize the shift in $s$ caused by the $j$ index, in the definition of $\widetilde{R}_j(s)$. 
				\begin{align*}
					\widetilde{R}_j(s) &= \frac{(-1)^j}{j!} \frac{2^{3-4s}\sigma_{2s-1+j}(m)\sigma_{2s-1 + j}(n)}{(mn)^{s - \hf + \frac j2} 
					\zeta^*(2s + j)\zeta^*(2-2s-j)} \frac{\Gamma(4s-2+j)}{\Gamma(2s-\tfrac12)}\\
					&= \frac{(-1)^j}{j!} 2^{2j} \frac{\Gamma(4s-2+j)}{\Gamma(4s-2+2j)}\frac{\Gamma(2s-\tfrac12 + j)}{\Gamma(2s - \tfrac12)} \\
					&\aquad \times \frac{2^{3-4(s + \frac j2)}\sigma_{2(s + \frac j2)-1}(m)\sigma_{2(s+ \frac j2)-1}(n)}{(mn)^{(s +\frac j2)- \hf} 
					\zeta^*(2(s + \tfrac j2))\zeta^*(2-2(s+\tfrac j2))} \frac{\Gamma(4(s + \tfrac j2) -2)}{\Gamma(2(s+\tfrac j2) - \tfrac12)}\\
					&= \frac{(-1)^j}{j!} 4^j \frac{(2s- \tfrac12)_j}{(4s -2 +j)_j} \widetilde{R}_0\lt(s + \frac j2\rt).\qedhere
				\end{align*}
			\end{proof}

			We also record the following equality for convenience,
			\begin{equation}\label{eq:shiftResidual}
				\widetilde{R}_j\lt(s + \tfrac{k}{2}\rt) = \frac{(-1)^j}{j!}4^j \frac{(2s +k- \tfrac 12)_j}{(4s + 2k-2 +j)_j} \widetilde{R}_0\lt(s + \frac{j + k}{2}\rt).
			\end{equation}
			
			Call $r = j + k$ and sum over $k$ and $j$,
			\[
				\sum_{k=0}^R \frac{2^{k}}{k!} \sum_{j=0}^{R-k} \widetilde{R}_j\lt(s +  \frac{k}{2}\rt)
				= \sum_{r = 0}^R \widetilde{R}_0\lt(s +  \frac{r}{2}\rt) \sum_{k + j= r} \frac{2^{k}}{k!} \frac{(-1)^j}{j!}4^j 
				\frac{(2s +k- \tfrac 12)_j}{(4s + 2k-2 +j)_j}.
			\]
		
			Denote each summand of the sum over $r$ by $R_{\frac r2} (s)$, which is the analogue of $\widetilde{R}_r(s)$ in $Z_{m,-n}(s)$. 
			Now we look at the inner sum, i.e.\ the coefficient of $\widetilde{R}_0 (s + r/2)$ in this sum. Call it $G_r$, and 
			\begin{align*}
				G_r &= \sum_{k + j = r} \frac{2^k}{k!}\frac{(-1)^j4^j}{j!} \frac{(2s + k-\hf)_j}{(4s + 2k -2 + j)_j}\\
				& = \sum_{k + j = r} \frac{2^k}{k!}\frac{(-4)^j}{j!} \frac{\Gamma(2s + k - \hf + j)}{\Gamma(2s + k - \hf)} 
				\frac{\Gamma(4s + 2k -2 + j)}{\Gamma(4s + 2k - 2 + 2j)} \frac{\Gamma(2s + k-1 + j)}{\Gamma(2s + k - 1 + j)}\displaybreak[0]\\
				& = \frac{1}{\Gamma(2s  - 1 +r)} \sum_{k + j = r} \frac{2^k}{k!}\frac{(-4)^{r-k}}{(r-k)!} \sqrt{\pi} 2^{3 - 4s - 2r}  
				\frac{\Gamma(4s + 2k -2 + j)}{ \Gamma(2s + k - \hf) },
			\end{align*}
			where we have used the Legendre duplication formula. The sum over $k$ gives a polynomial which 
			can be viewed as a hypergeometric function.
			\begin{align*}
				G_r & = \frac{(-1)^r\sqrt{\pi} 2^{3 - 4s }}{r! \Gamma(2s  - 1 +r)} \sum_{k =0}^r \frac{2^{-k} (-1)^k r!}{k!(r-k)!}  
				\frac{\Gamma(4s -2  + r + k)}{ \Gamma(2s + k - \hf) }\displaybreak[0]\\
				& = \frac{(-1)^r\sqrt{\pi} 2^{3 - 4s }}{r! \Gamma(2s  - 1 +r)} \frac{\Gamma(4s - 2 + r)}{\Gamma(2s - \hf)}\sum_{k=0}^r 
				\frac{(-r)_k (4s -2 + r)_k}{(2s - \hf )_k} \frac{2^{-k}}{k!}\\
				& = \frac{(-1)^r\sqrt{\pi} 2^{3 - 4s }}{r! \Gamma(2s  - 1 +r)} \frac{\Gamma(4s - 2 + r)}{\Gamma(2s - \hf)}
				{}_2F_1(-r,4s - 2 + r; 2s - \tfrac12; \tfrac 12).
			\end{align*}
		
			We use \eqref{eq:hypergeometricSpecialValue} once again in evaluating the hypergeometric function. Since the hypergeometric 
			function vanishes for $r$ odd and therefore we put $r = 2r'$. Thus,
			\begin{align*}
				\sum_{r=0}^R \widetilde{R}_\frac{r}{2}(s) \mspace{-1mu}&= \sum_{r=0}^R \widetilde{R}_0\lt(s + \frac{r}{2}\rt)  
				\frac{(-1)^r\sqrt{\pi} 2^{3 - 4s }}{r! \Gamma(2s  - 1 +r)} 
				\frac{\Gamma(4s \mspace{-2mu}-\mspace{-2mu} 2\mspace{-2mu} + \mspace{-2mu}r)}{\Gamma(2s - \hf)}
				\frac{\sqrt{\pi}\Gamma\lt(2s - \hf\rt) }{\Gamma(\frac{1-r}{2}) \Gamma(2s - \hf  + \frac r2)} \\
				& = \sum_{r = 0}^R \widetilde{R}_0\lt(s + \frac{r}{2} \rt)\frac{(-1)^r\sqrt{\pi} 2^{r}\Gamma(2s - 1 + \frac r2)}{r!\Gamma(2s-1 +r)\Gamma(\frac{1-r}{2})}\\
				& = \sum_{r' = 1}^{\lfloor \frac{R}{2} \rfloor}\frac{(-1)^{r'}}{(r')!} \frac{\Gamma(2s - 1 + r')}{\Gamma(2s -1 + 2r')}   \widetilde{R}_0(s  + r').
			\end{align*}
		
			Finally substituting the expression for $\widetilde{R}_0$,
			\begin{align}
				\sum_{r=0}^{\lfloor \frac R2\rfloor} \widetilde{R}_r(s) &= \sum_{r=0}^{\lfloor \frac R2\rfloor}\frac{(-1)^r}{r!} 
				\frac{\Gamma(2s - 1 + r)}{\Gamma(2s -1+2r)}\frac{\Gamma(4(s + r) -2)}{\Gamma(2(s+r) - \tfrac12)} \label{eq:vanishResidualTerms} \\
				&\bquad\times\frac{2^{3-4(s+r)}\sqrt{\pi} \sigma_{2(s + r)-1}(m)\sigma_{2(s+ r)-1}(n)}{(mn)^{(s +r)- \hf} \zeta^*(2(s + r))\zeta^*(2-2(s+r))}
				\displaybreak[0] \notag\\
				&=\sum_{r=0}^{\lfloor \frac R2\rfloor} \frac{(-1)^r}{r!} \Gamma(2s +r - 1)
				\frac{\sigma_{2s + 2r-1}(m)\sigma_{2s+ 2r-1}(n)}{(mn)^{s +r- \hf} \zeta^*(2s + 2r)\zeta^*(2-2s-2r)}.\notag
			\end{align}
		  
			Since the odd terms vanish we see that the only additional terms come when we pass $\Re(s) = -R + \hf$ for a nonnegative integer $R$. 
			If $\Re(s) = - R + \hf$ then the treatment is identical with \eqref{eq:modifiedZetaSpectralExpansionCompact} replaced by 
			\eqref{eq:modifiedZetaSpectralExpansionOnTheLine}.
		\end{proof}

	
	\section{Poles and Residues}\label{sec:polesResidues}
	
		The opposite sign Kloosterman sum zeta function has poles in the region $0<\Re(s)< 1$ where the discrete part of the spectral expansion does not converge. Yet the
		expression \eqref{eq:linearCombination} in Proposition \ref{prop:linearCombination} allows us to identify those poles as well. 

		\begin{prop} \label{prop:criticalPoles}
			For $m,n >0$, the function $Z_{m,-n}(s)$ is analytic in the region $\Re(s) > \hf$, has simple poles at $s = \hf + it_j$ where $\qtr + t_j^2$ is the 
			$j$\th\ Laplace eigenvalue on $\SL(2,\Z)\backslash \H$, and poles at $\rho/2$ where $\zeta(\rho) = 0$. The order of the pole is determined by the order of 
			the zero of the Riemann zeta function. The residues of the poles at $s = \hf + it_j$ are given by
			\[
				\Res_{s = \hf + it_j} Z_{m,-n}(s) = \hf \lambda_j(m) \lambda_j(n) \rho_j(1) \overline{\rho_j(-1)} \Gamma(2it_j).
			\]
			For the pole coming from the continuous contribution in the spectral expansion it would be caused by $\zeta^*(2s)$ in the denominator i.e. the function
			\[
				Z_{m,-n}(s) - \frac{1}{\zeta^*(2s)} \lt(\frac{\sigma_{2s - 1}(m) \sigma_{2s -1}(n)}{(mn)^{s-\hf} \zeta^*(2s-1)}\Gamma(2s-1) \rt)
			\]
			is regular at $s = \frac{\rho}{2}$. Notice that the expression inside the parantheses is a holomorphic function of $s$ in $0\leq \Re(s) \leq \hf$, except
			at $s = 0$, and that simple pole is cancelled by the zero of $1/\zeta^*(2s)$ outside of the parantheses.
		\end{prop}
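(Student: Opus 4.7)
My plan is to leverage the representation
$Z_{m,-n}(s) = \sum_{k=0}^\infty \frac{2^k}{k!}\widetilde{Z}_{m,-n}(s + k/2)$
from Proposition \ref{prop:linearCombination}, combined with the spectral expansion of $\widetilde{Z}_{m,-n}$ given by Theorem \ref{thm:modifiedZetaSpectralExpansion}. The crucial structural observation is that for any $s$ in the closed strip $0 \leq \Re(s) \leq \hf$, every summand with $k \geq 1$ has $\Re(s + k/2) \geq 1$, where Theorem \ref{thm:modifiedZetaSpectralExpansion} furnishes the analytic formula $\widetilde{Z}_d(s+k/2) + \widetilde{I}(s+k/2,(0))$ with no singularities. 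Consequently every pole of $Z_{m,-n}(s)$ in this strip arises exclusively from the $k=0$ summand $\widetilde{Z}_{m,-n}(s)$.

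From this observation, analyticity on $\Re(s) > \hf$ is immediate: on this half plane the gamma factors $\Gamma(2s - 1 \pm 2it_j)$ in $\widetilde{Z}_d(s)$ are all regular, and the integrand of $\widetilde{I}(s,(0))$ has no pole on the contour $\Re(u) = 0$.

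For the simple poles at $s = \hf + it_j$, only the factor $\Gamma(2s - 1 - 2it_j)$ in the $j$\th\ discrete summand of $\widetilde{Z}_d(s)$ becomes singular; it gives a simple pole with residue $\hf$ by the chain rule. Evaluating the remaining factors at $s = \hf + it_j$ and applying Legendre's duplication formula $\Gamma(4it_j) = \pi^{-1/2}2^{4it_j-1}\Gamma(2it_j)\Gamma(2it_j + \hf)$ cancels the prefactor $2^{3-4s}\sqrt{\pi}$ against the denominator $\Gamma(2s-\hf)$, yielding the clean residue $\hf\,\overline{\rho_j(m)}\rho_j(-n)\Gamma(2it_j)$. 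Converting Fourier coefficients to Hecke eigenvalues via $\rho_j(\pm n) = \rho_j(\pm 1)\lambda_j(n)/\sqrt{n}$ (using reality of $\lambda_j$) then brings this into the form stated in the proposition.

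For the pole at $s = \rho/2 \in (0,\hf)$, I would use Theorem \ref{thm:modifiedZetaSpectralExpansion} in its $R = 0$ strip form, so $\widetilde{Z}_{m,-n}(s)$ contains a residual piece built out of $\widetilde{R}_0(s)$. The factor $1/\zeta^*(2s)$ inside $\widetilde{R}_0(s)$ is the sole source of singularity at $\rho/2$, and its order matches the multiplicity of $\rho$ as a zero of $\zeta$ (since $\Gamma(s)\pi^{-s}$ is finite and nonzero there). Simplifying $\Gamma(4s-2)/\Gamma(2s-\hf)$ via Legendre's duplication, exactly as in the proof of Theorem \ref{thm:spectralExpansion}, and invoking the functional equation $\zeta^*(2-2s) = \zeta^*(2s-1)$, rewrites this residual piece as precisely the quantity subtracted in the proposition. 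The main obstacle is careful bookkeeping of the factors of two so that the singular parts cancel exactly; one must also verify the parenthetical remark, namely that the apparent simple pole at $s=0$ coming from $\Gamma(2s-1)$ inside the parenthesized expression is cancelled by the simple zero of $1/\zeta^*(2s)$ at $s = 0$ which arises from the simple pole of $\zeta^*$ at $0$.
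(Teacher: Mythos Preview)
Your approach is essentially the same as the paper's: reduce to the $k=0$ summand via Proposition~\ref{prop:linearCombination}, then read off the polar part of $\widetilde{Z}_{m,-n}(s)$ from Theorem~\ref{thm:modifiedZetaSpectralExpansion}, using Legendre duplication to simplify. One small slip: for $0\le\Re(s)\le\tfrac12$ and $k\ge1$ you only get $\Re(s+k/2)\ge\tfrac12$, not $\ge1$ (take $\Re(s)=0$, $k=1$); but this is harmless, since Theorem~\ref{thm:modifiedZetaSpectralExpansion} already supplies analyticity of $\widetilde{Z}_{m,-n}$ on the open half-plane $\Re(s)>\tfrac12$, which is all that is needed for the poles located at $s=\tfrac12+it_j$ and at $s=\rho/2$ with $0<\Re(\rho)<1$.
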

		
		\begin{proof}
			We look at the convergent expression
			\[
				Z_{m,-n}(s) = \sum_{k = 0}^\infty \frac{2^k}{k!} \widetilde{Z}_{m,-n}\lt(s + \frac k2\rt)
			\]
			given in Proposition \ref{prop:linearCombination}. The spectral expansion of $\widetilde{Z}_{m,-n}$ does not have any poles in the region $\Re(s)>\hf$, and 
			therefore in observing the poles of $Z_{m,-n}(s)$, we would only have to look at the $ k = 0$ term in the above infinite linear combination, i.e.
			\[
				\operatorname{Polar Part}_{s = \xi} Z_{m,-n}(s) = \operatorname{Polar Part}_{s = \xi} \widetilde{Z}_{m,-n}(s)
			\]
			for $s$ in the region $0< \Re(s) < 1$. 
			
			The rest of the proof is calculating the poles and residues of $\widetilde{Z}_d(s)$ and $\widetilde{R}_0(s)$ from the expressions given in Theorem 
			\ref{thm:modifiedZetaSpectralExpansion}. One needs to apply the duplication formula of the Gamma function in obtaining some of the simplification.
		\end{proof}
		
		The rest of the poles of $Z_{m,-n}(s)$ are in the region of absolute convergence for the spectral expansion of the function, hence we can use the expression 
		in Theorem \ref{thm:spectralExpansion}. Those poles are at the locations $\hf + it_j -r$ for $r \geq 1$ integers. The residues at these poles are given by
		\begin{equation} \label{eq:discretePoles}
			\Res_{s = \hf + it_j - r} Z_{m,-n}(s) = \hf \lambda_j(m) \lambda_j(n) \rho_j(1) \overline{\rho_j(-1)} \Gamma(2it_j - r).
		\end{equation}
		
		From the continuous spectrum, more precisely the terms $R_r(s)$ in Theorem \ref{thm:spectralExpansion}, there are going to be poles at $\frac{\rho}{2} -r$ where
		$r\geq 1$ is an integer, and where $\rho$ is a zero in the critical strip of the Riemann zeta function.
		

	\section{Growth in $m$, $n$, and $\Im(s)$.}\label{sec:growth}
	
		We will investigate the growth of $Z_{m,n}(s)$ in $m$, $n$ and the imaginary part of $s$. Call $\sigma = \Re(s)$. The aim of this section is to prove the
		following theorem
		\begin{theorem}\label{thm:growth}
			Let $m,n>0$ be positive integers and let $Z_{m,-n}(s)$ be the opposite sign Kloosterman sum zeta function. We have the following bound in the following regions.
			Let $\delta >0$;\\
			for $\sigma >\frac34$ and $\vep>0$ arbitrarily small
			\begin{equation} \label{eq:convergentBound}
				Z_{m,-n}(s) \ll_\vep (mn)^{\qtr + \vep}
			\end{equation}
			for $\hf + \delta <\sigma <\frac34$
			\begin{equation}\label{eq:largerThanHalfBound}
				Z_{m,-n}(s) \ll_\vep (mn)^\theta (1 + |t|)^\hf + (mn)^{\qtr + \vep},
			\end{equation}
			
			for $\sigma <0$,
			\begin{equation}
				Z_{m,-n}(s) \ll_s (mn)^{\hf - s},
			\end{equation}
			and for $-R - \hf < \sigma < -R$ for $R\geq 0$ an integer
			\begin{equation}
				Z_{m,-n}(s) \ll_\sigma \begin{cases} (mn)^\theta (1 + |t|)^{\sigma}  + (mn)^{\hf - \sigma }(1 + |t|)^{4\sigma -2} &\text{ if } R = 0,\\
					(mn)^\theta (1 + |t|)^\sigma + (mn)^{\hf -\sigma}(1 + |t|)^{\sigma - 2} &\text{ if } R\geq 1. \end{cases}
			\end{equation}
			Here $\theta$ is the best progress towards the Ramanujan conjecture.
			
			In the strips $-R <\sigma< -R + \hf$ for $R\geq 1$, the bound on the function $Z_{m,-n}(s)$ differs whether we assume the Riemann hypothesis (RH) or not,
			if $s = \sigma + it$ is chosen so that 
			\[
				\lt|s + R - \frac{\rho}{2} \rt| \gg \frac{1}{\log(1 + |t|) } 
			\]
			for all zeros $\rho$ of the Riemann zeta function then
			\begin{equation}
				Z(s) \ll \begin{cases} (mn)^{\hf - \sigma} (1 + |t|)^{\sigma - 2 + 1/2\pi} + (mn)^\theta (1 + |t|)^\sigma &\text{assuming the RH,}  \\
				(mn)^{\hf - \sigma} (1 + |t|)^{A \log\log (10 + |t|)} + (mn)^\theta (1 + |t|)^\sigma &\text{unconditionally.} \\ \end{cases}
			\end{equation}
			Finally for $0 < \sigma < \hf$,
			\begin{equation}\label{eq:firstCriticalBound}
				Z (s) \ll_\vep \begin{cases} (mn)^{\hf - \sigma}(1 + |t|)^{4\sigma - 2 + \frac{1}{2\pi} } + (mn)^\theta(1 + |t|)^\hf + 
				(mn)^{\qtr + \vep} &\text{RH}, \\
				(mn)^{\hf - \sigma}(1 + |t|)^{A \log \log (10 + |t|)} + (mn)^\theta (1 + |t|)^\hf +  (mn)^{\qtr + \vep}
				&\text{o.w.},\end{cases}
			\end{equation}
			under the same assumption as above.
			
			Employing Phragm\'en Lindel\"of Principle to the bounds \eqref{eq:convergentBound} and \eqref{eq:largerThanHalfBound} we obtain
			\begin{equation}\label{eq:phragmenLindelof}
				Z_{m,-n}(s) \ll_{\vep} (1 + |t|)^{\frac32 - 2\sigma + \vep} (mn)^{\qtr + \vep}
			\end{equation}
			in the region $\hf  < \sigma < \frac34$. 

		\end{theorem}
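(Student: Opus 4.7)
The plan is to establish the bounds region-by-region, using the most suitable analytic expression for $Z_{m,-n}(s)$ in each region, and to fill the remaining strip by Phragm\'en--Lindel\"of interpolation.

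For $\sigma > \tfrac34$ I would apply the Dirichlet series \eqref{eq:zetaDirichlet} directly, together with Weil's bound $|S(m,-n;\ell)| \le \tau(\ell)(m,n,\ell)^{1/2}\ell^{1/2}$. Sorting the resulting sum by the divisor $d = (m,n,\ell)$ and absorbing $d$ into an $(mn)^\vep$ factor produces the bound \eqref{eq:convergentBound} after multiplying by the $(mn)^{\sigma-1/2}$ modulus of the prefactor.

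For $\sigma < 0$ I would plug in the spectral expansion of Theorem \ref{thm:spectralExpansion} and estimate each piece separately. The discrete sum $Z_d(s)$ is controlled by Kim--Sarnak ($|\rho_j(n)| \ll_\vep n^{\theta+\vep}\cosh(\pi t_j)^{1/2}$), Weyl's law for the density of spectral parameters, and Stirling's formula applied to the gamma product $\Gamma(s-\tfrac12+it_j)\Gamma(s-\tfrac12-it_j)$, whose exponential decay in $|t_j|$ ensures convergence and yields the $(mn)^\theta(1+|t|)^{\sigma}$ contribution. The integral $I(s)$ is bounded similarly, with the $\cosh(\pi t)$ weight cancelling the exponential decay of the gamma factors and $|\zeta(1+2it)|^{-1}\ll \log(1+|t|)$ handling the denominator. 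Each residue term $R_r(s)$ is controlled via divisor bounds on $\sigma_{2s+2r-1}$ and (for $\Re(2s+2r)>1$) standard estimates for $1/\zeta$; the prefactor of modulus $(mn)^{1/2-\sigma-r}$ produces the dominant $(mn)^{1/2-\sigma}$ contribution in this region.

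The delicate cases are the ``mixed'' strips $-R<\sigma<-R+\tfrac12$ (for $R\ge 1$) and $0<\sigma<\tfrac12$, where the $R_r(s)$ terms involve $\zeta(2s+2r)$ with argument inside the critical strip. Here the hypothesis that $s$ is uniformly far from every $\rho/2$ allows one to apply $|\zeta(2s+2r)|^{-1} \ll (1+|t|)^{1/2\pi+\vep}$ under RH, or the unconditional replacement $(1+|t|)^{A\log\log(10+|t|)}$ via the classical zero-free region; these yield precisely the factors appearing in \eqref{eq:firstCriticalBound} and its unconditional counterpart. Finally, on $\tfrac12+\delta<\sigma<\tfrac34$ I would interpolate \eqref{eq:convergentBound} against \eqref{eq:largerThanHalfBound} by the Phragm\'en--Lindel\"of principle to obtain \eqref{eq:phragmenLindelof}. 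The main technical obstacle throughout is bookkeeping the accumulating residues produced as the contour $\mathcal{C}$ of the continuous integral is shifted past successive poles of the integrand; this is what forces the case-by-case structure of the theorem and the zero-avoidance assumption on $s$, while simultaneously demanding uniformity in $m$, $n$ and $\Im(s)$.
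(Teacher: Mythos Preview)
Your outline has a genuine gap in the strip $0<\sigma<\tfrac34$. You propose to handle $0<\sigma<\tfrac12$ exactly as you handle the strips $-R<\sigma<-R+\tfrac12$ for $R\ge 1$, i.e.\ by bounding the pieces $Z_d(s)$, $I(s)$, $R_r(s)$ of the spectral expansion from Theorem~\ref{thm:spectralExpansion}. But that expansion only converges for $\Re(s)<0$; for $\sigma\ge 0$ the discrete sum $Z_d(s)$ diverges, so you have no expression to estimate. Likewise you never say how to obtain \eqref{eq:largerThanHalfBound} on $\tfrac12+\delta<\sigma<\tfrac34$ (you invoke it only as input to Phragm\'en--Lindel\"of), and the Dirichlet series does not converge absolutely there either. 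So in the entire range $0<\sigma<\tfrac34$ neither of your two tools is available.

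The paper fills this gap by passing through the auxiliary function $\widetilde{Z}_{m,-n}(s)$ and the identity \eqref{eq:linearCombination}. The spectral expansion \eqref{eq:modifiedZetaSpectralExpansion} of $\widetilde{Z}_{m,-n}$ converges already for $\Re(s)>\tfrac12$ (the extra $\Gamma(2s-\tfrac12)$ in the denominator supplies additional decay), so one can bound $\widetilde{Z}_{m,-n}(s)$ spectrally on $\tfrac12+\delta<\sigma<\tfrac34$ and on $0<\sigma<\tfrac12$ (with the residual term $\widetilde{R}_0(s)$), and then transfer to $Z_{m,-n}(s)$ via the absolutely convergent sum $\sum_k \tfrac{2^k}{k!}\widetilde{Z}_{m,-n}(s+\tfrac{k}{2})$. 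The same device is what actually yields \eqref{eq:convergentBound}: your direct use of Weil on the Dirichlet series of $Z_{m,-n}$ produces $(mn)^{\sigma-\tfrac12}(m,n)^\vep$, not the uniform $(mn)^{\tfrac14+\vep}$, whereas the exponential factor in $\widetilde{Z}_{m,-n}$ damps the small-$\ell$ terms and, combined with the linear combination, gives the claimed bound uniformly in $\sigma>\tfrac34$. Your treatment for $\sigma<0$ is the same as the paper's.
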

		
		As a last subsection we also state the growth of $Z_{m,n}(s)$ in terms of its $mn$ dependence for completeness.

		\subsection*{Region of Absolute Convergence of Dirichlet Series}
		
			Notice that when $\sigma >1$, we can trivially bound the Kloosterman sum in the Dirichlet series. Hence,
			\[
				|Z_{m,n}(s)| \leq (2\pi\sqrt{mn})^{2\sigma-1} \zeta(2\sigma -1) 
				\ll (mn)^{\sigma - \hf}.
			\]
			However we can also use Weil's bound,
			\[
				S(m,n,\ell) \leq \tau(\ell) (m,n,\ell)^\hf \ell^\hf
			\]
			where $\tau$ is the divisor function. 
			\begin{lemma}\label{lem:weilBound}
				In the region $\Re(s) =\sigma >\frac 34$, we have the bounds,
				\[
					Z_{m,n}(s) \ll_\vep (mn)^{\sigma-\hf}(m,n)^\vep \qquad \text{ and } \qquad \widetilde{Z}_{m,n}(s) \ll_\vep (mn)^{\sigma - \hf} (m,n)^\vep.
				\]
			\end{lemma}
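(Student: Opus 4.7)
The plan is to directly substitute Weil's bound into the Dirichlet series \eqref{eq:zetaDirichlet} and to extract the $(m,n)$-dependence via a divisor sum argument. For $\widetilde{Z}_{m,n}(s)$ we observe that the extra factor $\exp(-4\pi\sqrt{mn}/\ell)$ lies between $0$ and $1$, so any bound on the partial sums with positive summands that works for $Z_{m,n}(s)$ automatically works for $\widetilde{Z}_{m,n}(s)$. Hence it suffices to handle $Z_{m,n}(s)$.

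First I would write
\[
|Z_{m,n}(s)| \leq (2\pi\sqrt{mn})^{2\sigma - 1} \sum_{\ell=1}^\infty \frac{|S(m,n,\ell)|}{\ell^{2\sigma}}
\]
and apply $|S(m,n,\ell)| \leq \tau(\ell)\,(m,n,\ell)^{1/2}\,\ell^{1/2}$. After reducing the exponent on $\ell$, I would decompose the sum over $\ell$ according to the value of $d := (m,n,\ell)$, which divides $(m,n)$. Writing $\ell = de$ and using $\tau(de) \leq \tau(d)\tau(e)$ gives
\[
\sum_{\ell=1}^\infty \frac{\tau(\ell)(m,n,\ell)^{1/2}}{\ell^{2\sigma - 1/2}} \leq \sum_{d \mid (m,n)} \frac{\tau(d)}{d^{2\sigma - 1}} \sum_{e=1}^\infty \frac{\tau(e)}{e^{2\sigma - 1/2}} = \zeta(2\sigma - \tfrac12)^2 \sum_{d \mid (m,n)} \frac{\tau(d)}{d^{2\sigma - 1}}.
\]
The Euler product identity $\sum_n \tau(n)/n^s = \zeta(s)^2$ is what gives the convergence of the inner sum, and this is precisely where the condition $\sigma > 3/4$ is used (to make $2\sigma - 1/2 > 1$).

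The only remaining step is to bound the divisor sum over $d \mid (m,n)$. Since $2\sigma - 1 > 1/2 > 0$, the sum is at most $\sum_{d \mid (m,n)} \tau(d) = \tau_3((m,n)) \ll_\vep (m,n)^\vep$ by the standard bound on higher divisor functions. Combining everything yields $Z_{m,n}(s) \ll_\vep (2\pi\sqrt{mn})^{2\sigma - 1} (m,n)^\vep \ll_\vep (mn)^{\sigma - 1/2}(m,n)^\vep$, and the same bound carries over to $\widetilde{Z}_{m,n}(s)$ by the observation above. No step here is really an obstacle; the mild care required is making sure that the divisor sum over $d \mid (m,n)$ is handled with a bound that only costs $(m,n)^\vep$ rather than $(mn)^\vep$, so that the gcd factor is genuinely separated from the $mn$ factor in the final statement.
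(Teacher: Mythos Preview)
Your proof is correct and follows essentially the same approach as the paper's: bound both functions by the common absolute series $(2\pi\sqrt{mn})^{2\sigma-1}\sum_\ell |S(m,n,\ell)|\ell^{-2\sigma}$, insert Weil's bound, and extract the gcd dependence by summing over divisors $d\mid(m,n)$. The only cosmetic difference is that the paper replaces $\tau(\ell)$ by $\ell^\vep$ at the outset and then overcounts via $(m,n,\ell)^{1/2}\le\sum_{d\mid\ell,\,d\mid(m,n)}d^{1/2}$, whereas you keep $\tau(\ell)$ and use submultiplicativity $\tau(de)\le\tau(d)\tau(e)$; both routes land on the same divisor sum $\sum_{d\mid(m,n)}$ bounded by $(m,n)^\vep$.
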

			
			\begin{proof}
				Note that we can bound both of these functions with
				\[
					(2\pi \sqrt{mn})^{2\sigma -1} \sum_{\ell=1}^\infty |S(m,n,\ell)| \ell^{-2\sigma}.
				\]
				
				Using Weyl's bound, and that $\tau(\ell) \ll_\vep \ell^\vep$, for any $\vep>0$,
				\[
					\sum_{\ell=1}^\infty \frac{S(m,n,\ell)}{\ell^{2s}}\ll_\vep \sum_{\ell=1}^\infty \frac{1}{\ell^{2\sigma - \hf -\vep}} 
					\sum_{\substack{d |\ell\\ d|(m,n)}} d^\hf . 
				\]
				Here we overcount, each $\ell$ term with $(m,n,\ell) = d$ shows up. Exchange orders of summation and note that
				\[
					\sum_{d|(m,n)} \frac{1}{d^{2\sigma - 1 -\vep }} \leq \tau((m,n)) \ll_\vep (m,n)^\vep,
				\]
				in the region $\sigma >\frac34$. In the same region, the sum over $\ell$ converges and we obtain the lemma.
			\end{proof}
			
			We can have a better bound using formula \eqref{eq:linearCombination}. Initially we bound $\widetilde{Z}_{m,-n}(s)$ in the region $\Re(s)>1$. 
			\begin{lemma}
				Let $\vep>0$. One has the bound
				\[
					\widetilde{Z}_{m,-n}(s) \ll_\vep (mn)^{\qtr + \vep},
				\]
				for $\sigma >\frac34$. 
			\end{lemma}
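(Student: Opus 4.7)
The plan is to combine Weil's bound $|S(m,-n,\ell)| \leq \tau(\ell)(m,n,\ell)^{1/2}\ell^{1/2}$ with the fact that the exponential weight $e^{-4\pi\sqrt{mn}/\ell}$ acts as an effective cutoff that restricts the essential range of summation to $\ell \gtrsim \sqrt{mn}$. Since the preceding Lemma \ref{lem:weilBound} gives only $(mn)^{\sigma - 1/2}(m,n)^\vep$ using Weil's bound directly, we need the exponential to produce a saving of $(mn)^{\sigma - 3/4 - \vep}$, which is precisely what the effective cutoff at $\ell \sim \sqrt{mn}$ delivers.

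Writing $L = \sqrt{mn}$, the starting point is
\[
	|\widetilde{Z}_{m,-n}(s)| \ll (mn)^{\sigma - 1/2}\sum_{\ell=1}^\infty \frac{\tau(\ell)(m,n,\ell)^{1/2}}{\ell^{2\sigma - 1/2}} e^{-4\pi L/\ell}.
\]
To handle the gcd factor I would exchange orders of summation: for any $f(\ell)\geq 0$,
\[
	\sum_\ell (m,n,\ell)^{1/2} f(\ell) \leq \sum_{d\mid (m,n)} d^{1/2}\sum_{d\mid \ell} f(\ell),
\]
substitute $\ell = dk$, and use $\tau(dk)\leq \tau(d)\tau(k)$. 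For each $d$, setting $M = L/d$, this reduces everything to estimating
\[
	S_d := \sum_{k=1}^\infty \frac{\tau(k)}{k^{2\sigma - 1/2}} e^{-4\pi M/k}.
\]

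The key estimate is that, for $a := 2\sigma - \tfrac12 > 1$, we have $S_d \ll_\vep M^{3/2 - 2\sigma + \vep}$. This follows by splitting at $k = M$: for $k \gg M$ the exponential is essentially $1$ and the tail $\sum_{k>M} \tau(k) k^{-a}$ contributes $\ll M^{1 - a + \vep}$; for $k \ll M/\log M$ the exponential is $\ll M^{-A}$ for any $A$; the transitional range contributes the same order by trivial counting. Plugging this back in and carefully tracking the $d$-exponent yields a bounded factor
\[
	\sum_{d\mid(m,n)} \tau(d)\, d^{1/2 + (1/2 - 2\sigma) - (3/2 - 2\sigma + \vep)} = \sum_{d\mid(m,n)} \tau(d)\, d^{-1/2 - \vep} \ll 1,
\]
and an overall bound of $(mn)^{\sigma - 1/2}\cdot L^{3/2 - 2\sigma + \vep} = (mn)^{1/4 + \vep/2}$, as required.

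The main obstacle is really only the bookkeeping: one must arrange the Weil exponent of $\ell$, the gcd weight $(m,n,\ell)^{1/2}$, and the exponential cutoff so that the resulting $d$-exponent is strictly less than $-1/2$ (so the divisor sum converges uniformly in $(m,n)$) and the $L$-exponent cancels the prefactor $(mn)^{\sigma - 1/2}$ down to the target $(mn)^{1/4 + \vep}$. No deep input beyond Weil's bound and $\tau(\ell) \ll_\vep \ell^\vep$ is needed; the argument is essentially a weighted Rankin–Selberg style majorization.
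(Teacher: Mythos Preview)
Your approach is correct and is in fact the alternative route the paper itself flags immediately after its own proof: ``If we had used the Weil bound instead of the trivial one, we would have gotten the $(mn)^{1/4}$ bound without recourse to Phragm\'en--Lindel\"of.'' The paper instead argues indirectly: it uses only the trivial bound $|S(m,-n,\ell)| \leq \ell$, splits the $\ell$-sum dyadically according to the size of $\sqrt{mn}/\ell$, and from the exponential damping obtains $\widetilde{Z}_{m,-n}(s) \ll_\sigma \sqrt{mn}$ for $\sigma > 1$; it then interpolates this against the Weil-based bound $(mn)^{\sigma - 1/2}(m,n)^\vep$ from the preceding lemma on the line $\sigma = \tfrac34 + \delta$ via Phragm\'en--Lindel\"of, sending the right endpoint to infinity. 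Your direct estimate is more elementary and avoids any convexity argument; the paper's route has the minor advantage that the dyadic decomposition is a bit less fiddly with the trivial bound (no gcd factor to unfold). One small correction to your write-up: the divisor sum $\sum_{d \mid (m,n)} \tau(d)\, d^{-1/2 - \vep}$ is not $\ll 1$ uniformly in $(m,n)$ --- take $(m,n)$ to be a primorial and the Euler product diverges --- but it is trivially $\ll \sum_{d\mid (m,n)} \tau(d) \ll (m,n)^{\vep'} \ll (mn)^{\vep'}$, which is absorbed into the final exponent and does not affect the conclusion.
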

			
			\begin{proof}
				Initially assume $\sigma >1$. Split the sum over $\ell$ into dyadic segments,
				\begin{align*}
					\lt|\widetilde{Z}_{m,-n}(s)\rt| 
					&= \lt|(2\pi \sqrt{mn})^{2s-1} \sum_{\ell=1}^\infty \frac{S(m,-n,\ell)}{\ell^{2s}}\exp\lt(-\frac{4\pi \sqrt{mn}}{\ell}\rt)\rt|\\
					&\leq (2\pi \sqrt{mn})^{2\sigma-1} \mspace{-15mu} \sum_{k=-\log_2(\sqrt{mn}) }^\infty 
					\sum_{2^{-(k+1)} \leq \frac{2\pi\sqrt{mn}}{\ell} < 2^{-k} }
					\frac{1}{\ell^{2\sigma-1}} \exp\lt(-\frac{1}{2^{k}}\rt).
				\end{align*}
				Here we have used the trivial bound $S(m,n,\ell) \leq \ell$. Next,
				\begin{align*}
					\lt|Z_{m,-n}(s)\rt| &\leq  (2\pi \sqrt{mn})^{2\sigma - 1} \sum_{k=0}^{\infty} \frac{2^k 2\pi \sqrt{mn} }{(2^k 2\pi\sqrt{mn})^{2\sigma -1}}
					\exp\lt(-\frac{1 }{2^k}\rt)\\
					&\phantom{\leq } +(2\pi \sqrt{mn})^{2\sigma -1} \sum_{k=0}^{\log_2(\sqrt{mn})} \frac{2^k 2\pi\sqrt{mn} }{(2^k 2\pi \sqrt{mn})^{2\sigma -1}}
					\exp\lt(-2^k\rt)\\
					&\leq 2\pi \sqrt{mn}\lt( \sum_{k=0}^\infty \frac{1}{2^{k(2\sigma - 2)}} 
					+ \sum_{k=0}^\infty \frac{1}{2^{k(2\sigma-2)}}\exp\lt(-2^k\rt) \rt).
				\end{align*}
				The sums over $k$ converge, and therefore one has 
				\[
					\widetilde{Z}_{m,-n} (s) \ll_\sigma \sqrt{mn}
				\]
				under the assumption $\sigma >1$. In fact the bound is uniform in $\sigma$ given that $\sigma > 1 + \delta$ for $\delta>0$. 
				
				Combine this result with Lemma \ref{lem:weilBound} and use Phragm\'en Lindel\"of principle on the lines $\sigma = \frac34 + \delta$ 
				and $\sigma$ fixed but large. We then get the result.
			\end{proof}
			If we had used the Weil bound instead of the trivial one, we would have gotten the $(mn)^\qtr$ bound without recourse to Phragm\'{e}n-Lindel\"{o}f.
			
			After a use of \eqref{eq:linearCombination} we obtain,
			\[ 
				Z_{m,-n}(s) = \sum_{k=0}^\infty \frac{2^k}{k!} \widetilde{Z}_{m,-n}\lt(s + \frac k2\rt) \ll_\vep (mn)^{\qtr + \vep} \sum_{k=0}^\infty \frac{2^k}{k!} 
				\ll (mn)^{\qtr + \vep}.
			\]
			We have proved the following proposition
			\begin{prop}
				The opposite sign Kloosterman sum zeta function can be bounded as
				\[
					Z_{m,-n}(s) \ll_\vep (mn)^{\qtr + \vep},
				\]
				where $\Re(s) = \sigma > \frac 34$. 
			\end{prop}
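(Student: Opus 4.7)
The plan is to combine the two ingredients already developed in this subsection: the linear combination identity of Proposition \ref{prop:linearCombination} which expresses $Z_{m,-n}(s)$ as an infinite sum of shifts of $\widetilde{Z}_{m,-n}$, and the bound $\widetilde{Z}_{m,-n}(s) \ll_\vep (mn)^{\qtr + \vep}$ for $\sigma > \tfrac34$ proved in the preceding lemma. Since the proposition is stated in the same region $\sigma > \tfrac34$ where the Dirichlet series identity \eqref{eq:linearCombination} holds, it should simply be a matter of assembling these two facts.

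Concretely, I would begin by writing
\[
    Z_{m,-n}(s) \;=\; \sum_{k=0}^{\infty} \frac{2^k}{k!}\, \widetilde{Z}_{m,-n}\!\left(s + \tfrac{k}{2}\right),
\]
valid for $\Re(s) > \tfrac34$. Then I would observe that for every $k \geq 0$ we have $\Re(s + k/2) \geq \sigma > \tfrac34$, so the previous lemma applies uniformly to each term and gives $|\widetilde{Z}_{m,-n}(s + k/2)| \ll_\vep (mn)^{\qtr + \vep}$ with an implied constant depending only on $\vep$ (and not on $k$, since the bound from the lemma does not depend on $\sigma$ once $\sigma > \tfrac34$). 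Taking absolute values termwise and summing the geometric-type series yields
\[
    |Z_{m,-n}(s)| \;\ll_\vep\; (mn)^{\qtr + \vep} \sum_{k=0}^{\infty} \frac{2^k}{k!} \;=\; e^2 \, (mn)^{\qtr + \vep},
\]
which is exactly the claimed bound.

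The only subtlety worth flagging is that one needs uniformity of the implied constant in $k$ when invoking the previous lemma. This is what makes the elementary estimate of the lemma (based on Weil's bound and a dyadic decomposition) rather than a more refined one essential: its bound is of the form $C_\vep (mn)^{\qtr+\vep}$ with $C_\vep$ independent of $\sigma$ in the range $\sigma > \tfrac34$, so no factor growing in $k$ is introduced and the series $\sum 2^k/k!$ controls the total. I do not expect any serious obstacle beyond this bookkeeping check.
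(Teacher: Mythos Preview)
Your proposal is correct and matches the paper's argument essentially line for line: the paper also invokes \eqref{eq:linearCombination}, applies the preceding lemma's bound $\widetilde{Z}_{m,-n}(s+k/2)\ll_\vep (mn)^{1/4+\vep}$ to each term, and sums $\sum_{k\ge 0}2^k/k!$ to conclude. Your remark on the uniformity in $k$ of the implied constant is a useful clarification that the paper leaves implicit.
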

			
			In the next section we look at the spectral expansions to deduce the growth of these functions in other regions.

		\subsection*{The Region of Spectral Convergence}

			Given $\sigma<0$, we bound $Z_{m,-n}(s)$ using its spectral expression. However, because the continuous spectrum contains the additional terms
			$R_r^-(s)$ that involve reciprocal zeta functions, the $\Im(s) = t$ dependence will be very different in alternating strips of length $1/2$.
			
			\begin{prop} \label{prop:spectralBound}
				Assume $\Re(s)=\sigma<0$. Ignoring the $\Im(s) = t$ dependence, one has the bound,
				\[
					Z_{m,-n}(s) \ll_s (mn)^{\hf - \sigma}.
				\]
				If $R\geq 0$ is an integer and $-R - \hf < \sigma <R$, then we can also bound the $t$ dependence of the opposite sign Kloosterman sum zeta
				function as follows:
				\[
					Z_{m,-n}(s) \ll_\sigma \begin{cases} (mn)^\theta (1 + |t|)^{\sigma}  + (mn)^{\hf - \sigma }(1 + |t|)^{4\sigma -2} &\text{ if } R = 0,\\
					(mn)^\theta (1 + |t|)^\sigma + (mn)^{\hf -\sigma}(1 + |t|)^{\sigma - 2} &\text{ if } R\geq 1. \end{cases}
				\]
				And finally in the region $-R< \sigma <-R + \hf$ for $R\geq 1$ an integer
				\[
					Z_{m,-n}(s) \mspace{-4mu} \ll \mspace{-4mu} \begin{cases}(mn)^\theta (1 + |t|)^{\sigma} 
					+ (mn)^{\hf - \sigma} (1 + |t|)^{\sigma - 2 + \frac{1}{2\pi} }& \text{on RH,}\\
					             (mn)^\theta(1 + |t|)^\sigma +  (mn)^{\hf - \sigma}(1 + |t|)^{A \log\log(10 + |t|)}\mspace{-18mu} &  \text{unconditionally,}
					                \end{cases}
				\]
				where $s$ is positioned so that its distance from all $\rho/2$ where $\zeta(\rho) = 0$ is at least $1/\log(|t|)$ and $A$ is some positive constant.
			\end{prop}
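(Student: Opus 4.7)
My plan is to bound each of the three pieces $Z_d(s)$, $I(s)$, and $R_r^-(s)$ in the spectral expansion of Theorem \ref{thm:spectralExpansion} separately, applied in the appropriate strip for the given $\sigma = \Re(s)$. For the discrete sum $Z_d(s)$, I factor the Fourier coefficients via Hecke multiplicativity, $\rho_j(m) = \rho_j(1)\lambda_j(m)/\sqrt{m}$, apply the Kim--Sarnak bound $|\lambda_j(m)| \ll m^\theta \tau(m)$, and use the standard mean bound $|\rho_j(\pm 1)|^2 \ll \cosh(\pi t_j)(1+|t_j|)^\vep$. Stirling then gives
\[
|\Gamma(s - \hf + it_j)\Gamma(s - \hf - it_j)| \asymp |t+t_j|^{\sigma - 1}|t-t_j|^{\sigma - 1} e^{-\pi\max(|t|,|t_j|)}
\]
in the Stirling regime, while near $t_j = \pm t$ one of the Gammas is merely bounded. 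Combined with $\cosh(\pi t_j)$ the exponentials cancel only when $|t_j| \sim |t|$, and Weyl's law $\#\{t_j \leq T\} \sim T^2/12$ produces roughly $|t|$ spectral parameters per unit interval there; the resulting contribution from $|t\pm t_j|\ll 1$ dominates and yields $(mn)^\theta(1+|t|)^\sigma$.

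The continuous part $I(s)$ is treated with the same Stirling estimate, using $|\sigma_{2it'}(m)| \ll m^\vep$ and $|\zeta(1+2it')|^{-2} \ll \log^2(2+|t'|)$. Because the integration measure $dt'$ carries no Weyl-type weight, the integral contributes at most $(mn)^\vep(1+|t|)^{\sigma-1}$, which is absorbed into the discrete bound. For each residual term $R_r^-(s)$ with $0 \leq r \leq R$ in the strip $-R - \hf < \sigma < -R$, the divisor sums satisfy $\sigma_{2s+2r-1}(m) \ll m^\vep$ since $\Re(2s+2r-1)<0$; the factor $(mn)^{-(s+r-\hf)}$ has size $(mn)^{\hf - \sigma - r}$, maximized at $r=0$ to give $(mn)^{\hf-\sigma}$; and the $t$-dependence comes from $|\Gamma(2s+r-1)|$ together with $1/(\zeta^*(2s+2r)\zeta^*(2-2s-2r))$. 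Applying the functional equation $\zeta^*(w)=\zeta^*(1-w)$ to push zeta arguments out of the critical strip and then applying Stirling to every gamma factor yields the exponents $(1+|t|)^{4\sigma-2}$ when $R=0$ and $(1+|t|)^{\sigma-2}$ when $R\geq 1$; the uniform estimate $Z_{m,-n}(s)\ll_s (mn)^{\hf-\sigma}$ for all $\sigma<0$ is the worst of these $(mn)$-exponents across strips.

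The main obstacle is the strips $-R < \sigma < -R + \hf$ with $R \geq 1$, where $2s+2R$ lies inside the critical strip of $\zeta$ and $\zeta^*(2s+2R)^{-1}$ is no longer polynomially controlled. Provided $s$ stays at distance $\gg 1/\log(1+|t|)$ from every shifted half-zero $\rho/2 - R$, I substitute into the $R_R^-(s)$ term either the RH bound $|\zeta(\sigma'+it')|^{-1} \ll |t'|^{1/(2\pi)+\vep}$ or the unconditional $|\zeta(\sigma'+it')|^{-1} \ll \exp(A\log\log(10+|t'|))$, while estimating the remaining factors as in the previous strip; these yield the two alternative exponents $\sigma - 2 + 1/(2\pi)$ or $A\log\log(10+|t|)$. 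Adding the discrete, continuous, and residual bounds produces the case-by-case estimates stated in the proposition.
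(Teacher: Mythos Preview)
Your outline matches the paper's proof essentially step for step: Stirling on the product of Gammas, the Hoffstein--Lockhart type bound $|\rho_j(1)|^2\ll\cosh(\pi t_j)\,t_j^\vep$, Weyl's law (the paper makes the dyadic decomposition explicit) for $Z_d(s)$, the same estimate for $I(s)$, and a term-by-term estimate of the $R_r^-(s)$ using Titchmarsh's formula for $\log\zeta$ near the critical strip. Two small slips to correct: in this paper's (Motohashi) normalization the Hecke relation reads $\rho_j(m)=\rho_j(1)\lambda_j(m)$ with no $1/\sqrt m$ factor (following your formula literally would give $(mn)^{\theta-\hf}$ rather than $(mn)^\theta$); and your stated unconditional bound $|\zeta(\sigma'+it')|^{-1}\ll\exp\bigl(A\log\log(10+|t'|)\bigr)$ is far too strong --- the bound the paper derives from Titchmarsh~(9.6.3) is $\exp\bigl(A\log|t'|\log\log|t'|\bigr)=|t'|^{A\log\log|t'|}$, which is exactly what produces the exponent $A\log\log(10+|t|)$ on $(1+|t|)$ that you correctly quote in the next line.
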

			
			\begin{proof}
				From Stirling's formula,
				\begin{align*}
					Z_d(s) &\ll_\sigma \sum_{t_j} \lt| \lambda_j(n) \lambda_j(m) \rt| \lt|\rho_j(1)\rt|^2 \lt|
					\Gamma(s - \tfrac12 + it_j)\Gamma(s - \tfrac12 - it_j)\rt|\\
					&\ll (mn)^\theta \sum_{t_j} e^{-\frac{\pi}{2} (-2t_j + |t + t_j|+| t-t_j|)} 
					(1 + |t - t_j|)^{\sigma - 1} (1 + |t + t_j|)^{\sigma- 1} .
				\end{align*}
				The sum is symmetric under conjugation, so let us without loss of generality assume that $t>0$. The exponential factor is 1 
				when $t_j>t$ and is equal to $e^{\pi(t- t_j)}$ when $t_j < t$. So if $t_j$ is much less than $t$, then the 
				contribution from those terms will be negligible. Separate the summation over 
				$t_j$ into dyadic intervals. Let $T >2t > T/2$ with $T = 2^k$ for some $k$. Given $1\ll \Delta \ll T$ we will look into the regions 
				where $|t - t_j| \asymp \Delta$, the choice of $\Delta = 2^i$ with $i=1,\ldots, k$, covers $t_j \leq T$. 
				
				We also will look into the intervals $2^nT \leq |t_j| \leq 2^{n+1} T$, for $n \in \N$. Then,
				\begin{align*}
					Z_d(s) &\ll_\sigma (mn)^\theta \sum_{\substack {i = 1\\ \Delta = 2^i} }^k \sum_{|t-t_j|\asymp \Delta }  e^{-\pi(t - t_j)} (1 + T)^{\sigma - 1}
					(1 + \Delta)^{\sigma -1 }\\
					&\phantom{\ll} + (mn)^\theta \sum_{n=0}^\infty \sum_{2^nT<|t_j|<2^{n+1}T } (1 + |t + t_j|)^{\sigma - 1}(1 + |t - t_j|)^{\sigma - 1}\\
					&\ll_\sigma (mn)^\theta \sum_{\substack{i=1\\\Delta=2^i}}^k 
					e^{- \Delta } (1 + T)^\sigma (1 + \Delta)^\sigma + \sum_{n=0}^\infty (1 + 2^nT)^{2\sigma}\\
					&\ll (mn)^\theta (1 + T)^\sigma
				\end{align*}
				The integral in the continuous spectrum can be similarly bounded. In fact the result is better because the continuous spectral density is less than that 
				of the discrete spectrum and bounding the divisor sum functions in $m$ and $n$ is easier than bounding Maass form coefficients. 
				The integrand contains zeta functions in the denominator and one of them is on the edge of the critical strip, 
				and since the reciprocal zeta function can be bounded by $O(\log(1 +|t|))$, we have
				\[
					I(s) \ll (1 + |t|)^{\sigma - 1} \log(1 + |t|)
				\]
				
				The other piece of the continuous spectrum, i.e. the additional terms that show up when the function is analytically continued past the lines
				$\sigma = \hf - r$ for $r \geq 0 $ an integer are given by,
				\[
					 R_r^-(s) = 2 \frac{(-1)^r}{r!} \Gamma(2s +r - 1)
					 \frac{\sigma_{2s + 2r-1}(m)\sigma_{2s+ 2r-1}(n)}{(mn)^{s +r- \hf} \zeta^*(1-2s - 2r)\zeta^*(2-2s-2r)}.
				\]
				On the one hand, ignoring the $t$ dependence
				\[
					R_r^-(s) \ll_s (mn)^{\hf - \sigma -r}.
				\]
				This choice is maximal when $r = 0$. This yields the first part of the proposition.				
				
				On the other hand, if we care about the $t$ dependence,
				\begin{align} \label{eq:RrBound}
					R_r^-(s) &\ll_\sigma \frac{1}{r!} (mn)^{\hf - \sigma - r} \frac{\Gamma(2s + r -1)}{\Gamma(\hf-s -r) \Gamma(1 -s  -r) \zeta(1-2s - 2r) } \notag\\
					&\ll (mn)^{\hf - \sigma - r} \frac{(1 + |t|)^{4\sigma + 3r -2}}{\zeta(1-2s -2r)}.
				\end{align}
				The problem with the $t$ dependence is the existence of the zeta function in the denominator therefore we first restrict our
				attention to $-R -\hf < \sigma <-R$, the left half of the strip, where the argument of the zeta functions are in the region of 
				absolute convergence. Furthermore, the subscripts of the divisor sums have real part less than zero. Hence, the largest growth in $mn$ is
				caused by the $r = 0$ term, in which case it can be bounded by $(mn)^{ \hf - \sigma}(1 + |t|)^{4\sigma - 2}$, and 
				when $r = R$, we have the largest contribution in the $|t|$ aspect. It is $(mn)^{\hf - \sigma - R} (1 + |t|)^{4\sigma +3R -2}$, which still 
				decays in $|t|$.
				
				Using the location of $R$ and $\sigma$ we deduce the bound,
				\[
					Z_c(s) \ll \begin{cases} (1 + |t|)^{\sigma-1+\vep}  + (mn)^{\hf - \sigma }(1 + |t|)^{4\sigma -2} &\text{ if } R = 0,\\
					(1 + |t|)^{\sigma-1+\vep} + (mn)^{\hf -\sigma}(1 + |t|)^{4\sigma - 2} + (mn)(1 + |t|)^{\sigma -2} \mspace{-10mu}
					&\text{ if } R\geq 1. \end{cases}
				\]
				Here $\vep>0$ is arbitrarily small and comes from the logarithmic term in bounding $I(s)$. 
				
				Now let us assume that we are in the other half of the strip, i.e. $-R < \sigma < - R + \hf$. The only trouble is going to come from the 
				$r = R$ term, because then the argument of the zeta function in the denominator is in the critical strip. We quote (9.6.3) from 
				\cite{titchmarsh1951theory} which states that
				\[
					\log \zeta(s)  = \sum_{|t - \gamma| < 1} \log (s - \rho) + O(\log t) 
				\]
				where the sum is over critical zeros $\rho = \beta + i\gamma$ of the Riemann zeta function. Together with the restriction $|s - \rho|> 1/\log t$ and 
				density results on the zeros of the Riemann zeta function, we deduce that,
				\[
					|\log \zeta(s)| \ll \log t \log \log t.
				\]
				Hence 
				\[
					\frac{1}{\zeta(s)} \ll t^{A \log\log t}
				\]
				for some constant $A$. This yields the result.
				
				In case what happens when we assume the Riemann hypothesis, we then revert to equation (14.15.1) loc.\ cit.\ which states
				\[
					\log \zeta(s) = \sum_{|t - \gamma| < \frac{1}{\log \log t} }  \log (s -\rho) + O\lt(\frac{\log t \log \log \log t}{\log \log t}\rt)
				\]
				for $\Re(s) \geq \hf$. For $\Re(s)<\hf$, we may use the functional equation. 
			\end{proof}
			
			One advantage of $Z_{m,-n}(s)$ to $\widetilde{Z}_{m,-n}(s)$ is that in the spectral expansion of $\widetilde{Z}_{m,n}(s)$ there are additional terms
			$\widetilde{R}_r^-(s)$ obtained when $\sigma$ passes every integer and half integer, as opposed to just the latter. This happens because of the vanishing
			of the sum over $k$ in \eqref{eq:vanishResidualTerms}. Hence we are able to obtain a safe strip of width $\hf$ where we are away from the critical zeros of 
			$\zeta(1-2s +r)$.
			
		\subsubsection*{The Region in Between}
			
			In the region $0 <\sigma <\frac 34$, we cannot  use the spectral expansion 
			of $Z_{m,-n}(s)$ because it simply does not converge there. Hence we use \eqref{eq:linearCombination}, in order to bound the $mn$
			dependence, and for that we need the bounds on $\widetilde{Z}_{m,-n}(s)$ in that region. The proof is similar to that of Proposition \ref{prop:spectralBound}
			above. We split the spectral sum into dyadic intervals, and estimate.
 			\begin{lemma}
 				Let $\delta >0$. In the region $\Re(s) = \sigma >\hf + \delta$, we have the bound
 				\[
 					\widetilde{Z}_{m,-n}(s) \ll_\delta (mn)^\theta (1 + |t|)^\hf
 				\]
 				where $\theta$ is the best progress towards the Ramanujan-Petersson conjecture, and $t = \Im(s)$. 
 				
 				In the region $0<\sigma<\hf$ the function is not bounded in terms of $t$ due to the additional term $\widetilde{R}_0(s)$ in the continuous
 				spectrum. It contains the reciprocal of the Riemann zeta function. However we can still bound the $m$ and the $n$ dependence, not on the zeros
 				of $\zeta(2s)$ by
 				\[     
 					\widetilde{Z}_{m,-n}(s) \ll_s (mn)^{\hf -\sigma} +(mn)^\theta.
 				\]
 				The $t$ dependence is given as 
 				\begin{equation}
					\widetilde{Z}_{m,-n}(s) \ll \begin{cases} (mn)^{\hf - \sigma} (1 + |t|)^{4\sigma - 2 + \frac{1}{2\pi}} +(mn)^\theta(1 + |t|)^\hf &\text{on RH,} \\
					(mn)^{\hf - \sigma} t^{A \log\log (10 + |t|)} + (mn)^\theta (1 + |t|)^{\hf}&\text{unconditionally}. \end{cases}
 				\end{equation}
 				if $s$ is of distance $1/\log(1 + |t|)$ away from all zeros of $\zeta(2s)$ in the critical strip and $A>0$.
 			\end{lemma}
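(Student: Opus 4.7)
The plan is to imitate the proof of Proposition \ref{prop:spectralBound}, substituting the spectral expansion of $\widetilde{Z}_{m,-n}(s)$ from Theorem \ref{thm:modifiedZetaSpectralExpansion} for that of $Z_{m,-n}(s)$. For $\sigma>\hf$ this expansion is simply $\widetilde{Z}_d(s) + \widetilde{I}(s,(0))$ with no residual terms, while crossing into $0<\sigma<\hf$ introduces the single correction $2\widetilde{R}_0(s)$ via \eqref{eq:modifiedZetaSpectralExpansionCompact} with $R=0$. Since the whole strip $\hf<\sigma$ lies safely inside the region of convergence of the spectral series, the bound $(mn)^\theta(1+|t|)^{1/2}$ will come from estimating the discrete and integral pieces directly, while the RH-vs-unconditional split in the strip $0<\sigma<\hf$ is entirely a feature of $\widetilde{R}_0(s)$.

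First I would bound $\widetilde{Z}_d(s)$. Applying Stirling's formula to $\Gamma(2s-1+2it_j)\Gamma(2s-1-2it_j)/\Gamma(2s-\hf)$ produces the exponential factor $\exp(-\pi(|t+t_j|+|t-t_j|-|t|))$, which decays rapidly as soon as $|t_j|\gg|t|$, together with polynomial factors that are controlled by the very same dyadic partition in $|t-t_j|$ and $|t_j|$ used in the proof of Proposition \ref{prop:spectralBound}. Inserting the Kim--Sarnak bound $|\lambda_j(m)\lambda_j(n)|\ll (mn)^\theta$, the normalization $|\rho_j(1)|^2\ll\cosh(\pi t_j)(1+|t_j|)^\vep$, and Weyl's law for the spectral density produces $\widetilde{Z}_d(s)\ll_\delta (mn)^\theta(1+|t|)^{1/2}$ throughout $\sigma>\hf+\delta$, and the same bound persists in the strip $0<\sigma<\hf$. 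The integral $\widetilde{I}(s,(0))$ is handled identically using $\sigma_{2it}(m)\sigma_{2it}(n)\ll(mn)^\vep$ and $|\zeta(1+2it)|^{-1}\ll \log(2+|t|)$, and is dominated by the discrete piece in the $m,n$ aspect.

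For $0<\sigma<\hf$ the residual term $\widetilde{R}_0(s)$ must be estimated separately, and this is where the main obstacle lies. Its divisor sums give $(mn)^{\hf-\sigma+\vep}$, the Gamma ratio $\Gamma(4s-2)/\Gamma(2s-\hf)$ contributes $(1+|t|)^{4\sigma-2}$ by Stirling, and the delicate factor is $1/\zeta(2s)$ inside the critical strip. Precisely this factor forces the standoff hypothesis $|s-\rho/2|\gg 1/\log(2+|t|)$ for every critical zero $\rho$ and is responsible for the RH-vs-unconditional dichotomy. Under that hypothesis, the Titchmarsh bounds already recalled in the proof of Proposition \ref{prop:spectralBound} give $|1/\zeta(2s)|\ll (1+|t|)^{A\log\log(10+|t|)}$ unconditionally and, under RH, a sharper bound that absorbs into the exponent $1/(2\pi)+o(1)$. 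Multiplying with the Gamma and divisor factors recovers the two cases of the stated $t$-dichotomy, and discarding the $t$-aspect recovers the $m,n$ bound $(mn)^{\hf-\sigma}+(mn)^\theta$. All remaining estimates reduce to Stirling, Weyl's law, and the divisor bound, which have already been deployed for $Z_{m,-n}(s)$.
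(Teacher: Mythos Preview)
Your proposal is correct and follows essentially the same route as the paper: the paper also declares the proof ``analogous to Proposition~\ref{prop:spectralBound}'', bounds $\widetilde{Z}_d(s)$ via Stirling, the dyadic decomposition in $|t-t_j|$, Weyl's law and the Kim--Sarnak bound to obtain $(mn)^\theta(1+|t|)^{1/2}$, then treats $\widetilde{R}_0(s)$ separately in $0<\sigma<\tfrac12$ using the same Titchmarsh estimates for $1/\zeta$ near critical zeros. One minor point: your claim that the bare ratio $\Gamma(4s-2)/\Gamma(2s-\tfrac12)$ alone produces $(1+|t|)^{4\sigma-2}$ is not quite right by itself---the exponent comes out correctly only after also absorbing the $\Gamma$-factors hidden in $\zeta^*(2s)\zeta^*(2-2s)$---but the final bound you state agrees with the paper's.
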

 			
 			\begin{proof}
 				The proof is analogous to Proposition \ref{prop:spectralBound}. Apply Stirling's formula to the Gamma functions involved,
 				\begin{align*}
 					&\widetilde{Z}_d(s) \ll_\sigma \sum_{t_j} \lambda_j(n) \lambda_j(m) \rho_j(1) \overline{\rho_j(-1)}
 					\frac{\Gamma(2s - 1 + 2it_j)\Gamma(2s - 1 - 2it_j)}{\Gamma(2s - \hf) }\\
 					&\ll (mn)^\theta \sum_{t_j} e^{\pi (t_j - |t + t_j|-| t-t_j|+ |t|)} 
 					\lt((1 + |t - t_j|) (1 + |t + t_j|)\rt)^{2\sigma- \frac 32} (1 + |t|)^{1\mspace{-2mu}-\mspace{-2mu}2\sigma}
 				\end{align*}
 				We again assume $t>0$ for convenience.. Let us separate the regions of $t_j$ into dyadic intervals.
 				First let $T >2t$ with $T = 2^k$ for some $k$. Given $1\ll \Delta \ll T$ we will look into the regions where $|t - t_j| \asymp \Delta$,
 				the choice of $\Delta = 2^i$ with $j$ up to $k$ covers $t_j \leq T$. We need to consider larger $\Delta$ for the rest of the $t_j$. Then,
 				\begin{align*}
 					\widetilde{Z}_d(s) &\ll_\sigma \sum_{\substack{i=0\\ \Delta = 2^i}}^k \sum_{|t-t_j|\asymp \Delta} 
 					e^{-\Delta} (1 + T)^{-\hf} (1 + \Delta)^{2\sigma - \frac32}\\
 					&\phantom{\ll_\sigma } +  \sum_{\substack{i=k+1\\ \Delta = 2^i}}^\infty  \sum_{|t-t_j|\asymp \Delta} 
 					e^{-\Delta} (1 + T)^{1 - 2\sigma} (1 + \Delta)^{4\sigma - 3}\\
 					&= (1 + T)^{\hf} \sum_{\substack{i =1\\ \Delta = 2^i }}^k e^{- \Delta} ( 1 + \Delta)^{2\sigma - \hf} 
 					+ (1 + T)^{1 - 2\sigma} \sum_{\substack{i=k+1\\ \Delta = 2^i}}^\infty e^{-\Delta}  (1 + \Delta)^{4\sigma - 1},
 				\end{align*}
 				where in the last line we have used Weyl's law 
 				\[
 					\#\{t_j: |t- t_j| \asymp \Delta \} \ll \begin{cases} \Delta T &\text{ for } 1 \ll \Delta \ll T, \\
 					T^2 &\text{ for } \Delta \gg T. \end{cases}				
 				\]
 				Note that the second sum goes to $0$ as $T \to \infty$. In the first sum, the sum over $\Delta$ converges to a number (depending on $\sigma$),
 				even if we complete the sum over $i$ to infinity. Thus,
 				\[
 					\widetilde{Z}_{d}(s) \ll_\sigma (mn)^\theta (1 + T)^\hf
 				\]
 				
 				The integral in the continuous spectrum is majorized by this quantity.
 				
 				The argument up to this point is equally valid for the regions $1/2 < \sigma <3/4$ and $0< \sigma <\hf$ as long as $s$ is not on the 
 				zeros of $\zeta(2s)$.
 				However, once $s$ is past the line $\sigma = \hf$, the term $\widetilde{R}_0^-(s)$ shows up. Looking at \eqref{eq:modifiedZetaResidualTerm}
 				we are able to see that
 				\[
					\widetilde{R}_0(s) \ll_s (mn)^{\hf - \sigma}
 				\]
 				in this region. Once again the greatest contribution is from the $r = 0$ term.
 				If we do not ignore the $s$ dependence, then the bound is given by
 				\[
					\widetilde{R}_r(s) \ll (mn)^{\hf - \sigma} \frac{(1 + |t|)^{4\sigma - 2}}{|\zeta(1-2s)|}.
 				\]
 				The considerations about the size of the reciprocal zeta function at the end of Proposition \ref{prop:spectralBound} apply once again
 				to give the lemma.
 			\end{proof}

 			Using this lemma and the fact that $Z_{m,-n}(s)$ can be written as an infinite linear combination, we deduce that if $\hf <\sigma < \frac34$, then
 			\begin{align*}
				Z_{m,-n}(s) &= \sum_{k = 0}^\infty \frac{2^k}{k!} Z_{m,-n}\lt(s + \frac k2\rt) \\
				&= \widetilde{Z}_{m,-n}(s) + O_\vep((mn)^{\qtr + \vep}) \\
				&\ll_\vep (mn)^\theta(1 + |t|)^\hf + (mn)^{\qtr + \vep}
			\end{align*}
 			and using Phragm\'en-Lindel\"of we get \eqref{eq:phragmenLindelof}. In the region $0<\sigma<\hf$, if we ignore the $t$ dependence,
 			\[
				Z_{m,-n}(s) = \widetilde{Z}_{m,-n}(s) + \widetilde{Z}_{m,-n}\lt(s + \hf\rt) + O_\vep((mn)^{\qtr + \vep}) 
				\ll_{\vep,s} (mn)^{\min\{\hf - \sigma, \qtr + \vep\}}.
 			\]
 			Otherwise we get \eqref{eq:firstCriticalBound}. This proves Theorem \ref{thm:growth}.

	\section{Sums of Kloosterman Sums}
	
		We are going to use the analytic properties of the zeta function $Z_{m,-n}(s)$ in order to prove a bound on
		\[
			S_{m,-n}(X) = \sum_{\ell \leq X} \frac{S(m,-n, \ell)}{\ell}. 
		\]
		We find a bound which is uniform in $m$ and $n$. 
		
		\begin{proof}[Proof of Theorem \ref{thm:sharpCutoff}]
		The proof is a standard application of Perron's formula as in \cite{davenport1967multiplicative}. Let $\sigma_1> 3/4$ and $T$ be a large quantity. 
		Let $X$ be a half integer. Set
		\[
			I(X, T) = \int_{\sigma_1 - iT}^{\sigma_1 + iT} Z_{m,-n}(s) \lt(\frac{X}{2\pi \sqrt{mn}} \rt)^{2s - 1} \frac{\d s}{s}.
		\]
		On the one hand, if express $Z_{m,-n}(s)$ as a Dirichlet series, exchange the orders of integration and summation, we obtain
		\[
			I(X,T) = S(X) + O\lt(\sum_{\ell=1}^\infty \frac{S(m,-n,\ell)}{\ell} \lt(\frac{X}{\ell}\rt)^{2\sigma_1 -1} \min\lt\{1, \frac{1}{T |\log X/\ell|} \rt\}\rt)
		\]
		Split the sum inside the big-oh into regions $\ell < 3X/4$, $3X/4 < \ell < X$, $X < \ell < 5X/4$ and $5X/4 < \ell$, calling 
		them $D_1, D_2, D_3$ and $D_4$ respectively. Analyze each term separately. Firstly,
		\[
			D_1 \leq X^{2\sigma_1-1} \sum_{\ell=1}^{\frac{3X}{4} } \frac{|S(m,-n,\ell)|}{\ell^{3/2}}  \frac{1}{T |\log(3/4)|}
			\ll \frac{X^{2\sigma -1 } \log^2 X}{T}(m,n)^\vep.
		\]
		$D_4$ can be similarly bounded. 
		
		In bounding $D_2$ we first realize that 
		\[
			\log X/\ell = - \log\lt( 1 - \frac{X - \ell}{X}\rt) \geq \frac{X - \ell }{\ell}.
		\]
		Call $\nu = X - \ell$. Then separate the sum into the regions $\nu < X/T$ and $X/T <\nu < X/4$. We can also bound these sums by the term above.
		
		Hence we have
		\begin{equation}
			I(X) = S(X) + O\lt(\frac{X^\hf \log^2 X}{T} (m,n)^\vep \rt).
		\end{equation}
		
		Let $\sigma_0>\hf$. We will move the line of integration to the vertical line of length $2T$ and abcissa $\sigma_1$. 
		
		\[
			I(X) = \lt(\int_{\sigma_0+iT}^{\sigma_1 + iT} + \int_{\sigma_1-iT}^{\sigma_0 -iT} + \int_{\sigma_0-iT}^{\sigma_0+iT} \rt) 
			Z_{m,-n}(s)\lt(\frac{X}{2\pi \sqrt{mn}} \rt)^{2s - 1} \frac{\d s}{s}.
		\]
		
		In the previous section we have established bounds for $Z_{m,-n}(s)$ in each of these domains. For the top and bottom parts of the rectangle, 
		we may use \eqref{eq:phragmenLindelof} and see that,
		\begin{align}
			&\int_{\sigma_0+iT}^{\sigma_1 + iT} + \int_{\sigma_1-iT}^{\sigma_0 -iT}Z_{m,-n}(s)\lt(\frac{X}{2\pi \sqrt{mn}} \rt)^{2s - 1} \frac{\d s}{s} \notag \\
			&\ll T^{\hf}  (mn)^{\qtr + \vep} \frac{\sqrt{mn}}{X} \int_{\sigma_0}^{\sigma_1} T^{ - 2\sigma} \lt(\frac{X}{\sqrt{mn}}\rt)^{2\sigma} \d \sigma
			\label{eq:sigmaIntegral}\\
			&\ll \frac{T^{\hf}}{X}  (mn)^{\frac34 + \vep} \frac{1}{|\log(X/(\sqrt{mn}T))| } 
			\max\lt\{ \lt(\frac{X}{\sqrt{mn}T}\rt)^{2\sigma_1},\lt(\frac{X}{\sqrt{mn}T}\rt)^{2\sigma_0} \rt\}.\notag
		\end{align}
		Note that in case $X \asymp \sqrt{mn} T$ then we can simply bound the integral in \eqref{eq:sigmaIntegral} with $O(1)$. 
		
		In the vertical line we use the bound \eqref{eq:largerThanHalfBound}, 
		
		\begin{align*}
			\int_{\sigma_0-iT}^{\sigma_0+iT} &Z_{m,-n}(s)\lt(\frac{X}{2\pi \sqrt{mn}} \rt)^{2s - 1} \frac{\d s}{s} \\
			&\ll \lt(\frac{X}{\sqrt{mn}}\rt)^{2\sigma_0-1}\lt(\int_1^T \frac{(mn)^\theta}{t^\hf}  +  \frac{(mn)^{\qtr + \vep}}{t} \d t\rt)\\
			&\ll X^{2\sigma_0-1}\lt((mn)^{\theta-(\sigma_0 - \hf)}T^\hf  +  (mn)^{\qtr + \vep - (\sigma_0-\hf)} \log T\rt).
		\end{align*}
		
		There are three cases, $X\leq \hf \sqrt{mn}T$, $X \geq 2 \sqrt{mn}T$ and $X$ in between. In the second case, choose $T = X^{\frac13}$, in the other cases choose 
		$T = (mn)^{1/4}$.
		
		Now choose $\sigma_0 = \hf  + \vep$ and $\sigma_1 = \frac34 + \vep$. Combining everything we deduce that,
		\[
			S(X) \ll_\vep X^{\frac{1}{6} +\vep}  ((m,n)^\vep + (mn)^{\theta + \vep})  + X^\vep (mn)^{\qtr + \vep }.\qedhere
		\]
		\end{proof}

\bibliographystyle{alpha}
\bibliography{/u/ekiral/Documents/MehmetBib}

\end{document}